\xpatchcmd{\paragraph}{\normalfont}{{\normalfont\bfseries}}{}{}
\newcommand{\myspace}{\qquad\qquad\qquad}
\newcommand{\cD}{{\mathcal D}}
\newcommand{\cG}{{\mathcal G}}
\newcommand{\cL}{{\mathcal L}}
\newtheorem{theorem}{Theorem}[section]
\newtheorem{lemma}[theorem]{Lemma}
\newtheorem{proposition}[theorem]{Proposition}
\newtheorem{remark}[theorem]{Remark}
\newtheorem{problem}[theorem]{Problem}
\numberwithin{equation}{section}
\newcommand{\R}{\mathbb{R}}
\newcommand{\C}{\mathbb{C}}
\def\ve{\varepsilon}
\date{}
\begin{document}


\title{An optimal control problem for Maxwell's equations}

\author{Francesca Bucci}
\address{Francesca Bucci, Universit\`a degli Studi di Firenze,
Dipartimento di Matematica e Informatica,
Viale Morgagni 65, 50139 Firenze, ITALY
}
\email{francesca.bucci@unifi.it}

\author{Matthias Eller}
\address{Matthias Eller, Georgetown University, 
Department of Mathematics and Statistics,
Georgetown~360, 37th and O Streets NW, Washington DC 20057, USA
}
\email{Matthias.Eller@georgetown.edu}



\begin{abstract}
This article is concerned with the optimal boundary control of the Maxwell system.~We consider a Bolza problem, where the quadratic functional to be minimized penalizes the electromagnetic field at a given final time.
Since the state is weighted in the energy space topology
-- a physically realistic choice --, the property that the optimal cost operator does satisfy 
 the Riccati equation (RE) corresponding to the optimization problem is missed, just like in the case of other significant hyperbolic partial differential equations; however, we prove that this Riccati operator as well as the optimal solution can be recovered by means of approximating problems for which the optimal synthesis holds via proper differential Riccati equations.
In the case of zero conductivity, an explicit representation of the optimal pair is valid
which does not demand the well-posedness of the RE, instead. 
\end{abstract}

\maketitle


\section{Introduction and main result}
In this note we consider the evolution of the electromagnetic field  $y=(e,h)$ in a bounded, open, and
connected set $\Omega$ with a $C^{1,1}$ boundary $\Gamma=\partial\Omega$. 
This evolution is governed by Maxwell's equations
\begin{equation}\label{e:maxwell}
\begin{split}
\ve e_t - \nabla \times h +\sigma e & = 0 
\\ 
\mu h_t + \nabla\times e &= 0 
\end{split}
\qquad \mbox{ in } Q = (0,T) \times \Omega\,,
\end{equation}
and augmented by initial conditions 
\begin{equation} \label{e:IC}
(e,h)(0) = (e_0,h_0)=y_0 \quad \mbox{ in } \Omega\;,
\end{equation}
and the boundary condition
\begin{equation}\label{e:BC}
\nu\times h = g  \quad \mbox{ in } \Sigma :=(0,T) \times \Gamma\;.
\end{equation}
Throughout the electric permittivity $\ve$ and the magnetic permeability $\mu$ are $3\times 3$ real-symmetric, uniformly positive definite matrix functions with elements in the Sobolev space $W^{1,\infty}(\Omega)$ and the conductivity $\sigma$ is a real-symmetric, non-negative definite matrix function with entries in $L^\infty(\Omega)$.
Note that all coefficients are time-independent. 
The exterior unit normal vector along $\Gamma$ is denoted by $\nu$. Note that the vector $\nu \times h$ is a tangential vector field, that is the inner product $\langle \nu, \nu\times h\rangle =0$ for continuous $h$. Hence, $g$ is usually assumed to be a tangential vector field, for example 
\begin{equation*}
g\in L^2_{\mathrm{tan}}(\Sigma) 
= \big\{g\in L^2(\Sigma,\C^3)\colon \langle \nu, g\rangle = 0 \; \textrm{a.e.~on $\Gamma$}\big\}\,.
\end{equation*}
This boundary datum is acting as a control function and the goal is to minimize the state of the electromagnetic field at final time $T$.
On the state space 
\begin{equation} \label{e:state-space}
Y=L^2(\Omega,\C^6)
\end{equation} 
we introduce the inner product 
\begin{equation} \label{e:inner}
(y,z)_{Y} = (y_1,\ve z_1)_{L^2(\Omega)} + (y_2,\mu z_2)_{L^2(\Omega)}\,.
\end{equation}
In order to guarantee a unique solution to the initial-boundary value problem (IBVP) 
\eqref{e:maxwell}-\eqref{e:IC}-\eqref{e:BC} in $C([0,T],Y)$, we need $g \in L^2(0,T;U)$, where 
\begin{equation} \label{e:control-space}
U=H_\mathrm{tan}^{1/2}(\Gamma), 
\end{equation}
see Proposition~\ref{p:homogeneous-pbm}.
Hence, we will consider the cost functional 
\begin{equation}\label{e:cost}
J(g)=J_T(g) = \alpha \int_0^T  \|g(t,\cdot)\|^2_U\,dt + \|y(T,\cdot)\|^2_Y
\end{equation}
with $Y$ and $U$ as specified in \eqref{e:state-space} and \eqref{e:control-space}, 
respectively, and
where $\alpha>0$ is a parameter. 

\smallskip
While there is an established literature regarding the optimal control of linear evolution equations with
quadratic cost functionals, there are only few results concerning the dynamic Maxwell equations
\cite{yo17,ll02,la01}. 
The optimal boundary control of hyperbolic problems has been discussed in the classical literature (see 
\cite{bddm}, \cite{las-trig-redbooks})
and it is an accepted fact that the synthesis of the optimal solution is a much more delicate problem in the case of systems governed by hyperbolic partial differential equations (PDE) than in the case of parabolic PDE. 

Parabolic (and parabolic-like) evolutions have better regularity properties, which is helpful at many steps of the analysis. 
These play a particularly critical role in relation to the well-posedness of the Riccati equations corresponding to the optimization problem.
A rough theoretical explication in the realm of functional analysis and operator theory is the following one: the very same definition (then, the regularity) of the {\em gain} operator $B^*P(t)$ that occurs in the quadratic term of the Riccati equation (see \eqref{e:Cauchy-for-DRE}) is strongly influenced by the regularity of the operator $B^*e^{tA^*}$, where $B^*$ is intrinsically {\em unbounded} in the case of boundary or point control, while $e^{tA^*}$ is a $C_0$-semigroup in a Hilbert space $Y$.
Analytic semigroups provide the valuable feature of somehow `compensating' the unboundedness of $B$ because of its structure.
It is important to recall that this may be true as well in the case of certain problems associated with systems of coupled hyperbolic-parabolic PDE, even in the absence of analyticity of the overall semigroup; see \cite{las-cbms}, \cite{abl_2005,abl_2013}, \cite{ac-bu-uniqueness_2023} and the references therein.

The linear-quadratic problem for hyperbolic PDE is discussed at length in the monograph 
\cite[Vol.~II, Ch.~8-10]{las-trig-redbooks}, under a distinctive assumption on the linear operators
$A$ and $B$ that describe the controlled dynamics $y'=Ay+Bg$.
This inequality -- recorded explicitly and discussed in Remark~\ref{r:trace-regularity}, see \eqref{e:admissibility} --, corresponds to a (boundary) regularity estimate, specific to either PDE problem; this explains why it is termed in the literature ``abstract trace regularity'' condition (or
``admissibility condition'').
Hyperbolic systems of first-order are discussed in some generality. However, in the chosen illustration the boundary condition satisfies the Kreiss-Sakamoto condition and the (lateral) boundary is non-characteristic.
Also, a recurring feature is that the observation/weighting operators $C$ and $G$ that occur in the quadratic functional to be minimized, that is (in its general abstract form) 
\begin{equation} \label{e:abstract-general-cost}
J(g) = \int_0^T \big(\|Cy(t)\|^2_Z + \|g(t,\cdot)\|^2_U\,dt\big) + \|Gy(T)\|^2_Y\,,
\end{equation}
are assumed to be suitably smoothing; see \cite[Vol.~II, Ch.~8-10]{las-trig-redbooks}.
This allows to establish the well-posedness of the differential Riccati equation thereby achieving
a full synthesis\footnote{It will be made clear during the analysis that the fact that $B^*P(t)$ is well-defined on the optimal evolution and thus giving sense to the feedback formula is one thing; the property that $B^*P(t)$ is bounded on the state space $Y$ 
(or at least on a dense subset of it) is another.} of the optimal control. 
(To understand this point, one needs once again to take into account how the optimal cost operator $P(t)$ depends on $e^{tA^*}C^*C$ and $e^{tA^*}G^*G$; see e.g. the formula \eqref{e:riccati-op-general} (wherein $C=0$). 
The gain operator $B^*P(t)$ will depend on $B^*e^{tA^*}C^*C$ and $B^*e^{tA^*}G^*G$ accordingly.)
  
\smallskip
The Maxwell system under consideration does not fit into the general framework for two reasons. The boundary condition is conservative in the sense of \cite[Definition 1.1]{el12} and does not satisfy the Kreiss-Sakamoto condition. 
Furthermore, our weighting operator in \eqref{e:cost} is not smoothing: we aim at allowing, e.g., $G=I$. 
Indeed, the consideration of a functional where the state is observed in the energy space topology is a physically realistic choice.

The Kreiss-Sakamoto condition is necessary and sufficient for the well-posedness of the initial-boundary value problem in $C([0,T],L^2(\Omega))$ with initial data in $L^2(\Omega)$ and boundary data in $L^2(\Sigma)$, and for the $L^2$-regularity of the traces of the solution on the lateral boundary \cite{kr70,ra71,chazarainpiriou82}. Furthermore, under these conditions there is a regularity theory: more regular initial data and boundary data result in more regular solutions and traces \cite{rauch72}. The two works by Lagnese \cite{la01} and Lagnese and Leugering \cite{ll02} discuss the optimal control of Maxwell's equation by means of the so-called impedance boundary condition which satisfies the Kreiss-Sakamoto condition. The work by Yousept \cite{yo17} uses an internal control; in this case no regularity issues arise.   

In order to obtain well-posedness for the initial-boundary value problem \eqref{e:maxwell}-\eqref{e:BC} in $C([0,T],L^2(\Omega))$, one needs extra space regularity of the boundary data and does not obtain good trace regularity \cite{el12}. The regularity theory in this case is also not a simple matter and has been completed only rather recently \cite{s22}. We summarize these results below in Proposition \ref{p:nonhomogeneous-pbm}. 
With this result in hand we discuss the optimal control problem following largely the established path in the literature. 
Starting with the existence and the uniqueness of the {\em open-loop} optimal solution, we move to the feedback control in {\em closed-loop} form and discuss the Riccati theory. 
The outcomes are detailed in Theorem~\ref{t:main} below:
a feedback representation of the unique optimal control is valid, with a gain operator that is well-defined on the optimal state; however, in the absence of well-posedness of the Riccati equation -- which would bring about its unique solution $P(t)$, entering into the said formula -- the optimal control (as well as $P(t)$) is attained only via an approximation procedure.    

Lastly, we confine ourselves to a Maxwell system with zero conductivity\footnote{ This means that the region $\Omega$ is filled with material which is not a conductor.}.
As we will see later, in this case the work of Flandoli \cite{flandoli_1987} allows to
obtain $P(t)$ as a result of the well-posedeness of the dual Riccati equation.
This is true for quadratic functionals to be minimized in the general form \eqref{e:abstract-general-cost}, while in particular in the case of our functional \eqref{e:cost} the optimal solution is determined even more straightforwardly.  

While our focus is on Maxwell equations, our approach can serve as an example of other optimal control problems for hyperbolic systems subject to a boundary control action exerted through a conservative boundary condition. 
We expect that the optimal control of the linear elastic wave equations with a control of the traction on the boundary can be analyzed in a similar fashion. 

\medskip


\subsection{Main results}
For expository purposes, the principal outcomes of this work are gathered (in this introduction) in a unique result, that is Theorem~\ref{t:main} below, pertaining to the Maxwell system \eqref{e:maxwell} with non-trivial matrix $\sigma$.

We introduce the family of functionals
\begin{equation}\label{e:cost_s}
J_s(g)=\alpha \int_s^T  \|g(t,\cdot)\|^2_U\,dt + \|y(T,\cdot)\|^2_Y
\end{equation}
with $s$ varying in $[0,T)$; the relative optimal control problem is formulated accordingly.


\begin{problem}[\bf Parametric optimal control problem] \label{p:problem_s}
Given $y_0(\cdot)\in Y$, minimize the functional \eqref{e:cost_s} overall $g\in L^2(s,T;U)$, 
with $y(t,x)$ subject to \eqref{e:maxwell}-\eqref{e:BC} corresponding to a boundary control function $g(t,x)$ and to a datum $y_0(\cdot)$ at the initial time $s\in [0,T)$.
\end{problem}

 
\begin{theorem}[Main result] \label{t:main}
With reference to Problem~\ref{p:problem_s}, the following statements hold true for any $s\in [0,T)$.

\begin{enumerate}

\item[\bf S1.] 
For each $y_0\in Y$ there exists a unique optimal pair $(\hat{g}(\cdot,s,y_0),\hat{y}(\cdot,s;y_0))$
which satisfies 
\begin{equation} \label{e:optimal-pair}
\hat{g}(\cdot,s,y_0)\in L^2(s,T;U)\,, \quad \hat{y}(\cdot,s;y_0)\in C([s,T],Y)\,.
\end{equation}

\smallskip

\item[\bf S2.] 
Given $t\in [s,T]$, the linear bounded operator $\Phi(t,s)\colon Y \longrightarrow Y$ defined by 
\begin{equation} \label{e:evolution-map} 
\Phi(t,s)y_0 :=\hat{y}(\cdot,s,y_0)
\end{equation}
is an evolution operator, namely, it satisfies the transition property
\begin{equation*}
\Phi(t,t)=I\,, \quad 
\Phi(t,s)=\Phi(t,\tau)\Phi(\tau,s) \quad \textrm{for $s\le \tau\le t\le T$.}
\end{equation*}

\smallskip

\item[\bf S3.]
There exists a linear bounded operator $P(s)$ on $Y$ defined in terms of the optimal evolution and of the data of the problem (see \eqref{e:riccati-op-general}, which reduces to \eqref{e:riccati-operator} when $G=I$), such that the optimal cost is given by  
\begin{equation} \label{e:optimal-cost_1}
J_s(\hat{g})=\big(P(s)y_0,y_0\big)_Y\,.
\end{equation}
$P(t)$ is a non-negative self-adjoint operator in $Y$, for any $t\in [0,T]$;
it belongs to $C_s([0,T],Y)$, to wit, the map $z \longrightarrow P(t)z$ is continuous for each $z\in Y$.

\smallskip

\item[\bf S4.] 
The optimal control admits the following feedback representation
\begin{equation} \label{e:feedback}
\hat{g}(t,s,y_0)=-B^*P(t)\hat{y}(t,s,y_0)\,,
\end{equation}
with $\hat{y}(\cdot)$ given by \eqref{e:evolution-map}.

\smallskip

\item[\bf S5.]  
The operator $P(t)$ in {\bf S3.} is a solution to the Cauchy problem associated with the differential Riccati equation
\begin{equation} \label{e:Cauchy-for-DRE}
\hspace{6.5mm}
\begin{cases}
\frac{d}{dt}\big(P(t)x,y\big)_Y + \big(P(t)x,Ay\big)_Y + \big(Ax,P(t)y\big)_Y   
\\
\myspace -\big(B^*P(t)x,B^*P(t)y\big)_U=0\,, \quad x,y \in \cD(A)\,, \; t\in [0,T)
\\[1mm]
P(T)=I 
\end{cases}
\end{equation}
in a generalized sense (only): to wit, there exists a sequence of operators $\{P_n(\cdot)\}_n\in \cL(Y)$ 
such that

\medskip 
\begin{itemize}
\item
$P_n(t)$ solves \eqref{e:Cauchy-for-DRE}, with the gain operator $B^*P_n(t)$ which is bounded (for all $n$),
\item
the asymptotic result  
\begin{equation*}
\|P_n(\cdot)z-P(\cdot)z\|_Y\longrightarrow 0 \; \textrm{in $C([0,T],Y)$, for all $z\in Y$}
\end{equation*}
holds.
\end{itemize}

\end{enumerate}

\end{theorem}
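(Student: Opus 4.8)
\emph{The plan.} Recast Problem~\ref{p:problem_s} in the abstract form $y'=Ay+Bg$, $y(s)=y_0$, on the state space $Y$ of \eqref{e:state-space}: here $A$ is the Maxwell generator attached to the homogeneous boundary condition $\nu\times h=0$, which since $\sigma\ge0$ generates a $C_0$-semigroup of contractions on $Y$ (Proposition~\ref{p:homogeneous-pbm}), while $B\in\cL(U,[\cD(A^*)]')$ realizes the inhomogeneous datum \eqref{e:BC}. The functional \eqref{e:cost_s} is then the instance $C=0$, $G=I$ of \eqref{e:abstract-general-cost}; absorbing $\alpha$ into the inner product of $U$, we may take $\alpha=1$. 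Throughout, the structural ingredient that substitutes for the Kreiss--Sakamoto based theory is the boundedness of the input-to-state map furnished by Proposition~\ref{p:nonhomogeneous-pbm}, together with its dual incarnation, the abstract trace (admissibility) estimate \eqref{e:admissibility} of Remark~\ref{r:trace-regularity}.

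\emph{Items S1--S3.} For $t\in[s,T]$ write $y(t)=e^{(t-s)A}y_0+(L_sg)(t)$, with $L_s\colon L^2(s,T;U)\to C([s,T],Y)$ bounded by Proposition~\ref{p:nonhomogeneous-pbm}. Then $J_s(g)=\|g\|^2_{L^2(s,T;U)}+\|e^{(T-s)A}y_0+(L_sg)(T)\|^2_Y$ is strictly convex, continuous and coercive on the Hilbert space $L^2(s,T;U)$, hence admits a unique minimizer $\hat g=\hat g(\cdot,s,y_0)$, which gives S1 with $\hat y\in C([s,T],Y)$; the Euler--Lagrange identity reads $\hat g(t)=-B^*\hat p(t)$, where $\hat p$ solves the backward problem $\hat p'=-A^*\hat p$ on $(s,T)$, $\hat p(T)=\hat y(T)$, and $B^*\hat p\in L^2(s,T;U)$ thanks to \eqref{e:admissibility}. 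The map $y_0\mapsto(\hat g,\hat y)$ is linear by uniqueness, and from $J_s(\hat g)\le J_s(0)=\|e^{(T-s)A}y_0\|^2_Y\le c\|y_0\|^2_Y$ one gets $\|\hat g\|_{L^2(s,T;U)}+\sup_t\|\hat y(t)\|_Y\le c\|y_0\|_Y$; therefore $\Phi(t,s)$ of \eqref{e:evolution-map} is a well-defined bounded operator, and its transition property (S2) is Bellman's optimality principle --- the tail on $[\tau,T]$ of the optimal trajectory starting at $(s,y_0)$ is optimal for the data $(\tau,\hat y(\tau))$. For S3, $y_0\mapsto J_s(\hat g(\cdot,s,y_0))$ is a bounded non-negative quadratic form, hence is represented, via polarization, by a bounded non-negative self-adjoint $P(s)$, which is \eqref{e:optimal-cost_1}; explicitly $(P(s)x,y)_Y=(\hat g^x,\hat g^y)_{L^2(s,T;U)}+(\hat y^x(T),\hat y^y(T))_Y$, i.e.\ \eqref{e:riccati-operator}. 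Strong continuity of $t\mapsto P(t)z$ follows from continuous dependence of the optimal pair on the initial time, again via Proposition~\ref{p:nonhomogeneous-pbm}.

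\emph{Item S4.} Matching the two expressions for $\hat g$ with dynamic programming identifies $\hat p(t)=P(t)\hat y(t)$ on $[s,T]$: restarting the problem at time $t$ from $\hat y(t)$, its adjoint state is $\hat p|_{[t,T]}$ by uniqueness and S2, and evaluating the S3 representation at $\hat y(t)$ gives $P(t)\hat y(t)=\hat p(t)$; hence $\hat g(t)=-B^*\hat p(t)=-B^*P(t)\hat y(t)$, which is \eqref{e:feedback}. This asserts nothing about $B^*P(t)$ being bounded on $Y$: the composition $B^*P(t)$ is meaningful only along the optimal state, where it acts through the adjoint trajectory $\hat p$, whose trace $B^*\hat p$ lies in $L^2(s,T;U)$ by \eqref{e:admissibility}.

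\emph{Item S5, and the main obstacle.} Since $G=I$ is not smoothing, the Lasiecka--Triggiani argument of \cite[Vol.~II]{las-trig-redbooks} that would upgrade $B^*P(t)$ to a bounded operator and $P(\cdot)$ to a classical solution of \eqref{e:Cauchy-for-DRE} is unavailable. The plan is to introduce approximating problems with cost $J^n_s(g)=\|g\|^2_{L^2(s,T;U)}+\|G_ny(T)\|^2_Y$, the $G_n$ being bounded self-adjoint smoothing operators built from the resolvent of $A$, with $G_n\to I$ strongly but \emph{not} in operator norm --- the latter being the structural reason why only a generalized synthesis survives in the limit. For each fixed $n$ the weight $G_n^*G_n$ is smoothing, so the classical theory of \cite[Vol.~II]{las-trig-redbooks} applies --- its hypotheses, in particular \eqref{e:admissibility}, being met by Propositions~\ref{p:homogeneous-pbm} and \ref{p:nonhomogeneous-pbm} --- and yields $P_n(\cdot)\in C([0,T],\cL(Y))$ with bounded gain $B^*P_n(t)$, solving the proper differential Riccati equation (the differential part of \eqref{e:Cauchy-for-DRE}, with terminal datum $G_n^*G_n\to I$) together with the feedback synthesis $\hat g_n(t)=-B^*P_n(t)\hat y_n(t)$ for the approximating problem. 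It then remains to prove $P_n(t)z\to P(t)z$ in $C([0,T],Y)$ for every $z\in Y$: first $\hat g_n\to\hat g$ in $L^2(s,T;U)$ and $\hat y_n\to\hat y$ in $C([s,T],Y)$, by a standard lower-semicontinuity and equicoercivity argument based on $J^n_s\to J_s$ and on bounds uniform in $n$; then one passes to the limit in the quadratic-form representation of $P_n$. The \emph{main difficulty} is exactly this limit passage: securing estimates uniform both in $n$ and in $t$, and controlling the quadratic Riccati term $\|B^*P_n(t)x\|^2_U$, which does not converge in $\cL(Y)$ --- precisely what confines \eqref{e:Cauchy-for-DRE} to the generalized sense of S5. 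A further, genuinely technical, point is to check that all structural hypotheses of the classical theory hold for the approximating problems, which ultimately rests on the recent sharp regularity theory underpinning Proposition~\ref{p:nonhomogeneous-pbm}.
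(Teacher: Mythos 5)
Your architecture coincides with the paper's: direct minimization of the coercive quadratic functional via the input--output map $L_s$ of Proposition~\ref{p:nonhomogeneous-pbm}, the adjoint-state identity $\hat g=-\alpha^{-1}B^*e^{(T-\cdot)A^*}\hat y(T)$ (the paper's \eqref{e:pre-feedback}), dynamic programming for the transition property, the representation $P(t)=e^{(T-t)A^*}\Phi(T,t)$ of \eqref{e:riccati-operator}, the feedback formula via \eqref{e:evolution}, and, for S5, resolvent approximants $G_n=nR(n,A)$ of the identity so that each approximating problem falls under the classical theory of \cite{las-trig-redbooks}. Two points, however, need attention.

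First, the step you dispatch in one clause --- ``strong continuity of $t\mapsto P(t)z$ follows from continuous dependence of the optimal pair on the initial time'' --- is exactly the claim that has to be proved, and it is the one place where the paper departs from the textbook route. Continuity of $P(\cdot)z$ reduces to continuity of $s\mapsto\Phi(t,s)$, and the proof of \cite[Vol.~II, Lemma~9.3.2.3]{las-trig-redbooks} invokes the smoothing hypotheses on $G$ that fail for $G=I$; your appeal to Proposition~\ref{p:nonhomogeneous-pbm} alone does not close this. The paper's substitute argument is short but genuinely needed: right continuity in $s$ follows from the evolution property and continuity in $t$; for left continuity one estimates $\|\Phi(s,s-h)x-x\|\le\|e^{hA}x-x\|+\|L_{s-h}[L_{s-h}^*\Phi(\cdot,s-h)x](s)\|$ and exploits $L_{s-h}^*\in\cL(L^1(s-h,s;Y),L^2(s-h,s;U))$, so that the relevant norm of the uniformly bounded optimal trajectory over the shrinking interval $(s-h,s)$ is $O(\sqrt h)$. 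Second, in S5 you propose a lower-semicontinuity/equicoercivity limit passage and then ``pass to the limit in the quadratic-form representation of $P_n$''; convergence of the quadratic forms only yields weak operator convergence, whereas \eqref{e:convergence_4} asserts $\|P_n(t)z-P(t)z\|_Y\to0$ uniformly in $t$. The paper instead works with the closed formulas $\hat g_n=-\Lambda_{sT,n}^{-1}L_{sT}^*G_n^*G_ne^{(T-s)A}y_0$ and $P_n(t)=e^{(T-t)A^*}G_n^*G_n\Phi_n(T,t)$, the uniform bound $\|\Lambda_{sT,n}^{-1}\|\le1/\alpha$, and the strong convergence $G_n^*G_n\to I$; if you retain your variational route you must still finish through the operator representation \eqref{e:riccati-op-general} to obtain strong convergence. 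With these two repairs your proposal matches the paper's proof.
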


\smallskip


If $\sigma=0$ in the Maxwell's system \eqref{e:maxwell}, while all of the statements in Theorem~\ref{t:main} are still valid, we may actually make use of the work
\cite{flandoli_1987} where the optimal cost (or Riccati) operator $P(t)$ is shown to be the inverse of 
the unique solution $Q(t)$ to the Cauchy problem associated with the dual Riccati equation,
which reads (in its full form) as 
\begin{equation} \label{e:Cauchy-for-dualRE}
\begin{cases}
\frac{d}{dt}\big(Q(t)x,y\big)_Y = \big(Q(t)x,A^*y\big)_Y + \big(A^*x,Q(t)y\big)_Y 
-\big(B^*x,B^*y\big)_U
\\[2mm]
\hspace{3cm}+ \big(CQ(t)x,CQ(t)y\big)_Y =0\,, \quad x,y \in \cD(A^*)\,, \; t\in [0,T)
\\[1mm]
Q(T)=(G^*G)^{-1}\,.
\end{cases}
\end{equation}
Furthermore, as we will see later, the work \cite{flandoli_1987} provides a distinct {\em open-loop} representation formula for the optimal solution.

\begin{remark}
\begin{rm}  
We note here that the well-posedeness of \eqref{e:Cauchy-for-dualRE} can be established more readily owing to the fact that the operator $C$ which occurs in the quadratic term is bounded; see e.g. \cite[Theorem~2.1.1]{flandoli_1987} or \cite[Theorem~9.6.2.4]{las-trig-redbooks}.
We emphasize at the outset that in the case where $C=0$, the equation in \eqref{e:Cauchy-for-dualRE} is not even quadratic. 
\end{rm}
\end{remark}

The special case $\sigma=0$ will be discussed in greater detail in Section~\ref{s:zero-conductivity};
the corresponding findings are collected in Proposition~\ref{p:from-Flandoli}.
For the reader's convenience and the sake of completeness we will provide a statement which covers
cost functionals that display non-trivial (and non-smoothing) weighting operators $C$ and $G$ 
(see \eqref{e:abstract-general-cost}), while the weighting operator $G$ is further
assumed to be invertible.
Of course the above encompasses the specific case of interest $C=0$, $G=I$; however, the respective result will be explicitly specified.


\subsection{An outline of the paper}
In the next section we begin by setting up the well-posedness of the IBVP \eqref{e:maxwell}-\eqref{e:IC}-\eqref{e:BC} from the natural PDE perspective first (see Proposition~\ref{p:nonhomogeneous-pbm}), and  next from a functional-analytic (operator-theoretic) one; see the sections~\ref{ss:A} and \ref{ss:B}).
Both viewpoints are key to our analysis.
 
The optimal control problem is considered in Section~\ref{s:OCP}, where we show that the sought outcomes stated as S1.-S4. of Theorem~\ref{t:main} hold.
Although the line of argument is classical, there is some step where the low regularity of the weighting operator $G$ demands a careful analysis. 
What is left unachieved is the property that the gain operator $B^*P(t)$ ($P(t)$ being the optimal cost operator) -- which is well-defined on the optimal evolution, thus giving sense to the feedback
formula in S4. -- is bounded on the state space $Y$ (or at least on a dense subset of it).

Thus, in Section~\ref{s:approximating} we introduce a sequence of optimization problems for which the optimal synthesis holds true via proper differential Riccati equations, with the Riccati operator as well as the optimal solution (of the original problem) recovered in the limit.
This is accomplished in Proposition~\ref{e:extended-sln}, thereby establishing the statement S5. of Theorem~\ref{t:main}.  

Finally, Section~\ref{s:zero-conductivity} focuses on the same problem under the assumption that
the conductivity is zero. 
In this case, while all the statements of Theorem~\ref{t:main} remain true,
a distinct sharper result is valid, that is Proposition~\ref{p:from-Flandoli}. 

An additional proof of the valuable fact that $P(t)$ is an isomorphism is provided {\em a priori}
in Proposition~\ref{p:isomorphism}, independently of the results in \cite{flandoli_1987}. 


\section{On well-posedness of Maxwell's equations} \label{s:setup}
In this section we discuss and assess the well-posedness of the IBVP \eqref{e:maxwell}-\eqref{e:IC}-\eqref{e:BC} (with non-trivial initial and boundary data) in a chosen functional setting, as this is a key prerequisite to the study of the associated optimal control problems.
We first recall a critical well-posedness result, whose proof was carried out within the realm of hyperbolic systems theory and entailed para-differential calculus.
This result is then interpreted within a functional-analytic framework 
via operator semigroup theory. 
Since both perspectives are useful in our investigation, the (respective) needed elements are introduced and detailed below.

\subsection{A major well-posedness result. Weak solutions}
The boundary condition \eqref{e:BC} is conservative in the sense of \cite[Definition 1.1]{el12}. 
Hence, we have the following well-posedness result for the IBVP \eqref{e:maxwell}-\eqref{e:IC}-\eqref{e:BC} which is an application of Theorem~1.4 in \cite{el12} and Theorem~1.1 in \cite{s22}.
 
 
 
\begin{proposition} \label{p:nonhomogeneous-pbm}
For $g\in L^2(0,T;H_{\mathrm{tan}}^{1/2}(\Gamma))$ and $y_0\in Y$ there exists a unique weak solution $y\in C([0,T],Y)$ such that 
\begin{equation*}
\nu\times e\big|_{\Sigma} \in L^2(0,T;H_{\mathrm{tan}}^{-1/2}(\Gamma))\,.
\end{equation*}  
For $g\in L^2(0,T;H_{\mathrm{tan}}^{3/2}(\Gamma))\cap H^1(0,T; H_{\mathrm{tan}}^{1/2}(\Gamma))$ and $y^0\in H^1(\Omega,\C^6)$ satisfying the compatibility condition $g(0) = \nu\times y_0$ on $\Gamma$ there exists a unique differentiable solution $y \in C([0,T],H^1(\Omega,\C^6))$. 
\end{proposition}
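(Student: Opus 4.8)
The plan is to obtain the two assertions as consequences of the general well-posedness theory for conservative Maxwell boundary conditions, quoting Theorem~1.4 in \cite{el12} for the $L^2$-level statement and Theorem~1.1 in \cite{s22} for the regularity-propagation statement. First I would verify that the boundary condition \eqref{e:BC}, namely $\nu\times h=g$, is indeed conservative in the sense of \cite[Definition~1.1]{el12}: the associated homogeneous boundary operator is $h\mapsto \nu\times h$, whose kernel is a maximal (Lagrangian-type) subspace for the boundary bilinear form $\langle \nu\times e,\overline h\rangle - \langle \nu\times h,\overline e\rangle$ arising from integration by parts in \eqref{e:maxwell}, so that no energy flows through $\Gamma$ when $g=0$. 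This is the structural hypothesis under which \cite{el12} builds weak solutions. I would then invoke that theorem directly: for $y_0\in Y$ and $g\in L^2(0,T;H^{1/2}_{\mathrm{tan}}(\Gamma))$ it yields a unique $y\in C([0,T],Y)$ together with the hidden trace regularity $\nu\times e|_\Sigma\in L^2(0,T;H^{-1/2}_{\mathrm{tan}}(\Gamma))$ — note the characteristic loss of one-half derivative on the dual (electric) trace, which reflects precisely the failure of the Kreiss--Sakamoto condition discussed in the introduction.

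For the second assertion I would apply the regularity result of \cite{s22}: under the stronger hypotheses $g\in L^2(0,T;H^{3/2}_{\mathrm{tan}}(\Gamma))\cap H^1(0,T;H^{1/2}_{\mathrm{tan}}(\Gamma))$, $y_0\in H^1(\Omega,\C^6)$, and the first-order compatibility condition $g(0)=\nu\times y_0$ on $\Gamma$, the solution gains one full spatial derivative, i.e.\ $y\in C([0,T],H^1(\Omega,\C^6))$, and is differentiable in time in the appropriate lower-order space (so that \eqref{e:maxwell} holds in a strong sense). The compatibility condition is the exact requirement that the prescribed boundary trace of $h$ match the trace induced by the initial datum at $t=0$; without it a boundary layer in time appears and $C([0,T],H^1)$ regularity cannot hold up to $t=0$. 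I would check that the time-differentiated system, with $y_t$ playing the role of the state, again falls under the conservative framework with data $g_t\in L^2(0,T;H^{1/2}_{\mathrm{tan}})$ and $(y_t)(0)$ determined from \eqref{e:maxwell} at $t=0$, giving $y_t\in C([0,T],Y)$; combined with elliptic regularity for the spatial Maxwell operator $\nabla\times$ (using $\ve,\mu\in W^{1,\infty}$ and the $C^{1,1}$ regularity of $\Gamma$) this upgrades $y(t)$ to $H^1(\Omega)$ for each $t$, uniformly in $t$.

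The main obstacle — and the part I expect to require genuine care rather than citation — is the precise matching of function spaces: confirming that the trace space $H^{1/2}_{\mathrm{tan}}(\Gamma)$ for $g$ (rather than merely $L^2_{\mathrm{tan}}(\Sigma)$) is exactly what \cite{el12,s22} demand for a solution in $C([0,T],Y)$, and that the dual trace lands in $H^{-1/2}_{\mathrm{tan}}(\Gamma)$ and not some other negative-order space. This is dictated by the non-characteristic-versus-characteristic nature of $\Gamma$ for the Maxwell operator and by the loss inherent in conservative (non-Kreiss--Sakamoto) conditions; I would resolve it by tracking the integration-by-parts identity $\frac{d}{dt}\|y(t)\|_Y^2 = 2\,\mathrm{Re}\,\langle \nu\times e,\overline g\rangle_\Gamma - 2(\sigma e,e)_{L^2(\Omega)}$ and reading off, via the duality $H^{1/2}_{\mathrm{tan}}$--$H^{-1/2}_{\mathrm{tan}}$, that this pairing is well-defined precisely at the stated regularity. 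Everything else is a direct transcription of the quoted theorems, so the proof is short: state the conservativity, cite \cite[Theorem~1.4]{el12} for the first part, cite \cite[Theorem~1.1]{s22} for the second, and record the trace regularity and the role of the compatibility condition.
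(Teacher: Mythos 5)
Your proposal matches the paper's treatment: the paper gives no separate proof, stating only that the proposition is an application of Theorem~1.4 in \cite{el12} (conservative boundary conditions, weak solution and hidden trace regularity) and Theorem~1.1 in \cite{s22} (propagation of regularity under the compatibility condition), which is exactly the citation structure you propose. Your additional remarks on verifying conservativity and on the energy identity are reasonable supporting detail and do not change the argument.
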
 

\smallskip
The weak solution satisfies the identity
\begin{multline*}
(e,\ve \partial_t z_1 - \nabla\times z_2-\sigma z_1)_{L^2(Q)} 
+ (h,\mu \partial_t z_2 + \nabla\times z_1)_{L^2(Q)} 
\\
\qquad =(e(T),\ve z_1(T))_{L^2(\Omega)} - (e_0,\ve z_1(0))_{L^2(\Omega)} 
+ (h(T),\mu z_2(T))_{L^2(\Omega)}
- (h_0,\mu z_2(0))_{L^2(\Omega)} 
\\
-(g,z_1)_{L^2(\Sigma)} - (e,\nu\times z_2)_{L^2(\Sigma)}
\end{multline*}
for all $z=(z_1,z_2)\in H^1(Q,\C^6)$.
With $y=(e,h)$, recalling the inner product \eqref{e:inner} in $Y$, the above identity for the weak solution is rewritten as 
\begin{multline}\label{5}
(e,\ve \partial_t z_1 - \nabla\times z_2-\sigma z_1)_{L^2(Q)} 
+ (h,\mu \partial_t z_2 + \nabla\times z_1)_{L^2(Q)} 
\\
=(y(T), z(T))_{Y} - (y_0,z(0))_{Y}
- (g,z_1)_{L^2(\Sigma)} - (e,\nu\times z_2)_{L^2(\Sigma)}\,.
\end{multline} 

\smallskip

\subsubsection{The operator underlying the free dynamics. Mild and strict solutions}
\label{ss:A} 
We also discuss briefly the IBVP with homogeneous boundary data, i.e. with $g\equiv 0$. 
Then it is convenient to use semigroup theory in order to obtain solutions of higher regularity.
In the state space $Y$ (recall \eqref{e:state-space}) we define the following unbounded operator $A$:
\begin{equation} \label{e:A}
\hspace{-1cm}
\begin{cases}
\cD(A)= H(\mathrm{curl},\Omega)\times H_0(\mathrm{curl},\Omega)   
\\[1mm]
Ay = \begin{bmatrix}
\ve^{-1}\nabla\times y_2 -\ve^{-1}\sigma y_1
\\[1mm]
-\mu^{-1}\nabla\times y_1
\end{bmatrix}\,,
\end{cases}
\end{equation}   
where
\[
H(\mathrm{curl},\Omega) = \big\{ u \in L^2(\Omega,\C^3)\colon  
\nabla \times u \in L^2(\Omega,\C^3)\big\}
\]
with the norm $\|u\|_{H(\mathrm{curl},\Omega)}^2 = \|u\|^2_\Omega + \|\nabla\times u\|^2_\Omega$. 
An important fact is that the trace mappings 
\begin{equation*}
u \longmapsto \nu\times u\big|_{\Gamma} \quad \textrm{and} 
\quad u \longmapsto u_\mathrm{tan} \big|_{\Gamma}
\end{equation*}
which are defined for continuous functions extend to continuous linear operators from $H(\mathrm{curl},\Omega)$ onto 
\[
H^{-1/2}(\mathrm{div}_\Gamma) =  \big\{w \in H_{\mathrm{tan}}^{-1/2}(\Gamma)\colon
\mathrm{div}_\Gamma w\in H^{-1/2}(\Gamma)\}\,,
\]  
and from $H(\mathrm{curl},\Omega)$ onto 
\[
H^{-1/2}(\mathrm{curl}_\Gamma) = \{ w \in H_{\mathrm{tan}}^{-1/2}(\Gamma)\colon 
\mathrm{curl}_\Gamma w\in H^{-1/2}(\Gamma)\}\,,
\]
respectively.   
The space $H_0(\mathrm{curl},\Omega)$ is a closed subspace of $H(\mathrm{curl},\Omega)$ defined by 
\[
H_0(\mathrm{curl},\Omega) = \{ u\in H(\mathrm{curl},\Omega)\colon \nu\times u =0 \;
\textrm{on $\Gamma$}\}\,.
\] 
For a good introduction into these function spaces we refer to the book by Cessenat \cite{cessenat96}.
Given $z\in \cD(A)$ we denote the tangential trace on $\Gamma$ of the first component function $z_1$  by $z_{1,\mathrm{tan}}$.  

The adjoint operator $A^*$ is computed readily: we have 
\begin{equation} \label{e:A-star}
A^* z = \begin{bmatrix}
-\ve^{-1}\nabla\times z_2 -\ve^{-1}\sigma z_1 
\\[1mm]
\mu^{-1} \nabla\times z_1
\end{bmatrix}\,,
\end{equation}
with $\cD(A^*)\equiv\cD(A)=H(\mathrm{curl},\Omega)\times H_0(\mathrm{curl},\Omega)$.

Semigroup theory allows to establish the well-posedness of the Cauchy problem
\begin{equation} \label{e:cauchy}
\begin{cases}
y'=Ay & t>0
\\
y(0)=y_0 &
\end{cases}
\end{equation}
via the Lumer-Phillips Theorem.

Although the result is probably well known, for the readers' convenience we collect the pertinent statements in the following proposition, referring~e.g.~to \cite[Proposition\,1]{el19} for the proof.
Recall that the solutions corresponding to initial data $y_0$ in the state space $Y$ are intended in a {\em mild} sense.
When $y_0\in \cD(A)$, then the corresponding solution $y(t)=e^{tA}y_0$ is {\em strict}; 
to wit, it indeed belongs to $C([0,\infty),D(A))\cap C^1([0,\infty),Y)$.


\begin{proposition} \label{p:homogeneous-pbm}
The operator $A$ defined by \eqref{e:A} is the generator of a strongly continuous contraction semigroup $S_t=e^{tA}$ in $Y$, $t\ge 0$.
Then, for any $y_0 \in Y$ there exists a unique mild solution $y(t) = e^{tA}y_0$ to the Cauchy problem 
\eqref{e:cauchy} (and hence to the IBVP problem \eqref{e:maxwell}-\eqref{e:IC}-\eqref{e:BC} with $g\equiv 0$); then, $y\in C([0,\infty),Y)$.

If $y_0 \in \cD(A)$ the unique solution is strict, namely,
\[
y \in C([0,\infty),\cD(A))\cap C^1([0,\infty),Y)\,.
\]  
\end{proposition}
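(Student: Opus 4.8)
The plan is to verify the hypotheses of the Lumer--Phillips theorem for the operator $A$ in \eqref{e:A}, namely that $A$ is densely defined, dissipative (in the $Y$-inner product \eqref{e:inner}), and that $\lambda I - A$ is surjective for some $\lambda > 0$; then the strict regularity for $y_0 \in \cD(A)$ is the standard consequence of generation theory. First I would note that $\cD(A) = H(\mathrm{curl},\Omega)\times H_0(\mathrm{curl},\Omega)$ is dense in $Y = L^2(\Omega,\C^6)$, since it contains $C_c^\infty(\Omega,\C^3)\times C_c^\infty(\Omega,\C^3)$. Next, dissipativity: for $y=(y_1,y_2)\in\cD(A)$ compute
\[
\mathrm{Re}\,(Ay,y)_Y
= \mathrm{Re}\Big[(\nabla\times y_2, y_1)_{L^2(\Omega)} - (\sigma y_1,y_1)_{L^2(\Omega)} - (\nabla\times y_1, y_2)_{L^2(\Omega)}\Big],
\]
where the $\ve$ and $\mu$ factors in the inner product cancel against the $\ve^{-1}$, $\mu^{-1}$ in $Ay$. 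The key point is the Green-type identity $(\nabla\times y_2,y_1)_{L^2(\Omega)} - (y_2,\nabla\times y_1)_{L^2(\Omega)} = \langle \nu\times y_2, y_{1,\mathrm{tan}}\rangle_\Gamma$ in the appropriate duality between $H^{-1/2}(\mathrm{div}_\Gamma)$ and its dual; since $y_1\in H(\mathrm{curl},\Omega)$ and $y_2\in H_0(\mathrm{curl},\Omega)$ so that $\nu\times y_2 = 0$ on $\Gamma$, the boundary term vanishes and the first and third terms above cancel. Hence $\mathrm{Re}\,(Ay,y)_Y = -(\sigma y_1,y_1)_{L^2(\Omega)} \le 0$ because $\sigma$ is non-negative definite, so $A$ is dissipative; the same computation with $A^*$ from \eqref{e:A-star} shows $A^*$ is dissipative as well.

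For the range condition I would solve $(\lambda I - A)y = f$ with $\lambda>0$, i.e.
\[
\lambda y_1 - \ve^{-1}\nabla\times y_2 + \ve^{-1}\sigma y_1 = f_1, \qquad
\lambda y_2 + \mu^{-1}\nabla\times y_1 = f_2.
\]
From the second equation $y_2 = \lambda^{-1}(\mu f_2 - \nabla\times y_1)$; substituting into the first yields a second-order problem for $y_1$ of the form $\lambda(\ve + \lambda^{-1}\sigma)y_1 + \lambda^{-1}\nabla\times(\mu^{-1}\nabla\times y_1) = $ (data), which I would formulate weakly on the space $H_0(\mathrm{curl},\Omega)$ for $y_1$. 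Wait --- actually the boundary conditions go the other way: $y_2\in H_0(\mathrm{curl},\Omega)$, which forces a Neumann-type condition $\nu\times(\nabla\times y_1)=0$ on $y_1$ while $y_1$ itself ranges over all of $H(\mathrm{curl},\Omega)$; so the correct variational formulation is: find $y_1\in H(\mathrm{curl},\Omega)$ with
\[
\lambda(\ve y_1,\varphi)_{L^2} + \lambda^{-1}(\mu^{-1}\nabla\times y_1,\nabla\times\varphi)_{L^2} + (\sigma y_1,\varphi)_{L^2}
= (\mbox{data},\varphi) \quad\forall\,\varphi\in H(\mathrm{curl},\Omega).
\]
The bilinear form on the left is bounded and coercive on $H(\mathrm{curl},\Omega)$ (the first term controls the $L^2$-part, the second the curl-part, using uniform positive-definiteness of $\ve$, $\mu$ and non-negativity of $\sigma$), so Lax--Milgram gives a unique $y_1\in H(\mathrm{curl},\Omega)$; then $y_2 := \lambda^{-1}(\mu f_2 - \nabla\times y_1)\in L^2$, and testing against $\varphi$ with $\nabla\times\varphi$ playing the role shows $\nabla\times y_2\in L^2$ and that $\nu\times y_2 = 0$, i.e. $y_2\in H_0(\mathrm{curl},\Omega)$. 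Hence $(y_1,y_2)\in\cD(A)$ solves $(\lambda I - A)y=f$, so $\mathrm{Range}(\lambda I - A) = Y$.

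By the Lumer--Phillips theorem, $A$ generates a $C_0$-semigroup of contractions $S_t = e^{tA}$ on $Y$, which gives the mild solution $y(t) = e^{tA}y_0 \in C([0,\infty),Y)$ for every $y_0\in Y$; this solution coincides with the weak solution of \eqref{e:maxwell}--\eqref{e:IC}--\eqref{e:BC} with $g\equiv 0$ by the uniqueness in Proposition~\ref{p:nonhomogeneous-pbm} (or directly by checking the identity \eqref{5}). Finally, for $y_0\in\cD(A)$ the abstract semigroup regularity theorem yields $y\in C([0,\infty),\cD(A))\cap C^1([0,\infty),Y)$ with $y'=Ay$. The main obstacle is the range/surjectivity step --- specifically, choosing the variational formulation on the right function space so that the boundary condition $\nu\times y_2=0$ encoded in $\cD(A)$ emerges naturally, and checking coercivity of the resulting $H(\mathrm{curl},\Omega)$-form; the dissipativity computation, while it requires the trace/Green identity for $H(\mathrm{curl},\Omega)$, is otherwise routine, and density is immediate.
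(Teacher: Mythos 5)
Your argument is correct and is essentially the route the paper takes: the paper does not reprove this proposition but invokes the Lumer--Phillips theorem and refers to \cite[Proposition~1]{el19}, whose proof is precisely your combination of density, dissipativity (via the $H(\mathrm{curl},\Omega)$ Green identity together with $\nu\times y_2=0$ for $y_2\in H_0(\mathrm{curl},\Omega)$, giving $\mathrm{Re}\,(Ay,y)_Y=-(\sigma y_1,y_1)\le 0$), and surjectivity of $\lambda I-A$ obtained by reducing to a coercive variational problem for $y_1$ on $H(\mathrm{curl},\Omega)$ and recovering $y_2\in H_0(\mathrm{curl},\Omega)$ from the weak formulation. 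The only blemish is a harmless constant in your elimination step, which should read $y_2=\lambda^{-1}\bigl(f_2-\mu^{-1}\nabla\times y_1\bigr)=\lambda^{-1}\mu^{-1}\bigl(\mu f_2-\nabla\times y_1\bigr)$; this does not affect the structure or validity of the argument.
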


\smallskip

\subsection{The input-to-state map, the control operator. Direct inequality}
\label{ss:admissibility}
Consider the bounded operator $L$ which maps the boundary data $g$ to the solution of the IBVP \eqref{e:maxwell}-\eqref{e:IC}-\eqref{e:BC} with initial data zero.
We denote by 
\[
L_T\colon L^2(0,T;U) \longrightarrow Y
\]
the bounded operator which maps the boundary data $g$ to $(Lg)(T)$, that is the solution of the IBVP \eqref{e:maxwell}-\eqref{e:IC}-\eqref{e:BC} with initial data zero at time $T$; then $L_Tg:= (Lg)(T)$.
In order to describe its adjoint $L_T^*$ we fix a scalar product in $U=H^{1/2}_\mathrm{tan}(\Gamma)$. In the following we will set
\begin{equation}\label{e:product}
(f,g)_U = (\sqrt{-\Delta_\Gamma}f,g)_{L^2(\Gamma)}\,,
\end{equation}
where $-\Delta_\Gamma$ denotes the Laplace-Beltrami operator on $\Gamma$. 
The Laplace-Beltrami operator is known to be positive and self-adjoint, hence its square root
$\sqrt{-\Delta_\Gamma}$ is given by the functional calculus for these operators. 


\begin{proposition}\label{p:L_T^*}
The operator $L_T^*\colon Y \to L^2(0,T;U)$ is given by the formula
\begin{equation} \label{e:L_T-adjoint}
L_T^* p = (-\Delta_\Gamma)^{-1/2}[e^{A^\ast(T-\cdot )}p]_{1,\mathrm{tan}}\,, \qquad p\in Y\,.
\end{equation}

\end{proposition}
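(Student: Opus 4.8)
The plan is to identify $L_T^*$ by the defining duality relation $(L_T g, p)_Y = (g, L_T^* p)_{L^2(0,T;U)}$ for all $g \in L^2(0,T;U)$ and all $p \in Y$, and to compute the left-hand side by testing the weak formulation \eqref{5} against the backward free evolution $z(t) = e^{A^*(T-t)}p$. First I would fix $p \in \cD(A^*)$ (the general case following by density, since both sides are bounded in $p \in Y$) and set $z(t) := e^{A^*(T-t)}p$; by Proposition~\ref{p:homogeneous-pbm} applied to the semigroup generated by $A^*$ (which is also a contraction semigroup, as $A^*$ is of the same structure by \eqref{e:A-star}), this $z$ is a strict solution of the backward problem, i.e. $-z' = A^* z$, $z(T) = p$, and $z \in C([0,T],\cD(A^*)) \cap C^1([0,T],Y)$. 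In particular $z$ is regular enough to be an admissible test function in \eqref{5}, and $z$ satisfies the homogeneous boundary condition $\nu \times z_2 = 0$ on $\Gamma$, because the second component of $\cD(A^*) = \cD(A)$ lies in $H_0(\mathrm{curl},\Omega)$.

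Next I would substitute this $z$ into \eqref{5} with $y = Lg$ (so $y_0 = 0$). The left-hand side of \eqref{5} involves $\ve \partial_t z_1 - \nabla \times z_2 - \sigma z_1$ and $\mu \partial_t z_2 + \nabla \times z_1$; reading off the components of $A^* z$ from \eqref{e:A-star} one checks that these combinations are precisely $\ve$ (resp. $\mu$) times the components of $\partial_t z + A^* z = 0$, hence the entire left-hand side of \eqref{5} vanishes. On the right-hand side, the term $(y_0, z(0))_Y$ is zero, the boundary term $(e, \nu \times z_2)_{L^2(\Sigma)}$ is zero because $\nu \times z_2 = 0$, and $(y(T), z(T))_Y = (L_T g, p)_Y$. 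What remains is
\begin{equation*}
0 = (L_T g, p)_Y - (g, z_1)_{L^2(\Sigma)},
\end{equation*}
so that $(L_T g, p)_Y = (g, z_1)_{L^2(\Sigma)} = (g, [e^{A^*(T-\cdot)}p]_{1})_{L^2(\Sigma)}$, where the subscript $1$ denotes the first component and the pairing is the $L^2(\Sigma)$ one.

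The final step is to convert the $L^2(\Sigma)$ pairing into the $L^2(0,T;U)$ pairing dictated by the inner product \eqref{e:product}. Since $g(t,\cdot)$ is a tangential field on $\Gamma$, only the tangential part of $[e^{A^*(T-t)}p]_1$ contributes, so $(g,[e^{A^*(T-\cdot)}p]_1)_{L^2(\Sigma)} = \int_0^T ([e^{A^*(T-t)}p]_{1,\mathrm{tan}}, g(t))_{L^2(\Gamma)}\,dt$. By the definition \eqref{e:product}, $(f,g)_U = (\sqrt{-\Delta_\Gamma} f, g)_{L^2(\Gamma)}$, hence $(v, g)_{L^2(\Gamma)} = ((-\Delta_\Gamma)^{-1/2} v, g)_U$ for tangential $v$ (using self-adjointness of $(-\Delta_\Gamma)^{-1/2}$ on $L^2_{\mathrm{tan}}(\Gamma)$). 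Applying this with $v = [e^{A^*(T-t)}p]_{1,\mathrm{tan}}$ yields exactly
\begin{equation*}
(L_T g, p)_Y = \int_0^T \big( (-\Delta_\Gamma)^{-1/2}[e^{A^*(T-t)}p]_{1,\mathrm{tan}}, g(t)\big)_U\,dt,
\end{equation*}
which identifies $L_T^* p$ with the right-hand side of \eqref{e:L_T-adjoint}.

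The main obstacle — and the point that needs the most care — is the justification that $z(t) = e^{A^*(T-t)}p$ is a legitimate test function in the weak identity \eqref{5}, which a priori requires $z \in H^1(Q,\C^6)$. For $p \in \cD(A^*)$ the semigroup regularity gives $z \in C^1([0,T],Y) \cap C([0,T],\cD(A^*))$, which lies in $H^1(0,T;Y) \cap L^2(0,T;\cD(A^*))$ but not obviously in $H^1(Q)$, since membership in $\cD(A^*)$ controls only $\nabla\times z_i$, not the full gradient. This is handled either by a density/approximation argument — approximating $p$ by smoother data, or approximating $z$ by elements of $H^1(Q)$ respecting the boundary condition $\nu\times z_2 = 0$ and passing to the limit using the continuity of all terms in \eqref{5} — or by invoking the more regular solution class in Proposition~\ref{p:nonhomogeneous-pbm} together with a transposition argument. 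One should also make sure the trace $[e^{A^*(T-t)}p]_{1,\mathrm{tan}}$ in \eqref{e:L_T-adjoint} is taken in the sense of the continuous extension to $H(\mathrm{curl},\Omega)$ described after \eqref{e:A}, so that $(-\Delta_\Gamma)^{-1/2}$ maps it back into $U = H^{1/2}_{\mathrm{tan}}(\Gamma)$ and the formula is consistent with $L_T^* p \in L^2(0,T;U)$; this is exactly the regularity recorded in Proposition~\ref{p:nonhomogeneous-pbm} for $\nu\times e|_\Sigma$, read for the dual problem.
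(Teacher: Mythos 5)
Your proposal is correct and follows essentially the same route as the paper: test the weak identity \eqref{5} against $z(t)=e^{A^*(T-t)}p$, observe that the volume and boundary terms drop out, and convert the $L^2(\Sigma)$ pairing to the $U$-pairing via \eqref{e:product}. The test-function regularity issue you flag is resolved in the paper exactly as you anticipate: it first takes $p\in H^1(\Omega,\C^6)\cap\cD(A)$, so that the differentiable solution class of Proposition~\ref{p:nonhomogeneous-pbm} makes $z$ an admissible $H^1(Q)$ test function, and then passes to all $p\in Y$ by density.
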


\begin{proof}
Recall that $y_0=(e_0,h_0)=0$. From the definition of the weak solution we see that 
\begin{equation*} 
(L_T g, p)_{Y}=(y(T),p)_{Y}= (g,z_1)_{L^2(\Sigma)}
\end{equation*}
whenever $z\in H^1(Q,\C^6)$ satisfies the system 
\[
\ve \partial_t z_1 - \nabla\times z_2 - \sigma z_1 
=  \mu \partial_t z_2 + \nabla\times z_1 = 0 \quad \textrm{in $Q$,}
\]
with final data $z(T)=p$ and the boundary condition $\nu\times z_2=0$ on $\Sigma$. 
Using the adjoint semigroup of $e^{tA}$ we can write $z(t) = e^{(T-t)A^*}p$.
Of course, Propositions~\ref{p:nonhomogeneous-pbm} and \ref{p:homogeneous-pbm} apply to this system.  
Hence, we have shown that 
\[
(L_T g, p)_{Y}  =(y(T),p)_{Y} = (g,[e^{A^\ast (T-\cdot)}p]_{1,\mathrm{tan}})_{L^2(\Sigma)}
\quad  \forall p\in H^1(\Omega,\C^6)\cap \cD(A)\,.
\]
By density this formula is also valid for all $p\in Y$ and the formula \eqref{e:L_T-adjoint} 
for the adjoint operator is proved in view of \eqref{e:product}.
\end{proof}


\subsubsection{The Green map}
We define a linear operator $\cG\colon H^{1/2}_{\mathrm{tan}}(\Gamma) \to H(\mathrm{curl},\Omega)^2$ by setting $\cG g= y$ where $y=(y_1,y_2)$ is the solution to the boundary value problem
\[
 \begin{split}
  \ve y_1 - \nabla \times y_2 +\sigma y_1 & = 0 
  \\ 
  \mu y_2 + \nabla\times y_1 &= 0 
\end{split}
\qquad \mbox{ in } \Omega 
\]  
with
\[
 \nu\times y_2 = g \quad \mbox{ in } \Gamma\;.  
\]
\begin{proposition}
The operator $\cG\colon H^{1/2}_{\mathrm{tan}}(\Gamma) \to H(\mathrm{curl},\Omega)^2$ is continuous.
\end{proposition}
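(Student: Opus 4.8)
The plan is to eliminate the second component and reduce the elliptic system to a single second-order ``curl--curl'' equation for $y_1$, to which the Lax--Milgram theorem applies directly. First I would note that the second equation forces $y_2 = -\mu^{-1}\nabla\times y_1$; inserting this into the first equation gives
\[
\nabla\times\big(\mu^{-1}\nabla\times y_1\big) + (\ve+\sigma)\,y_1 = 0 \qquad\text{in }\Omega ,
\]
while the boundary condition $\nu\times y_2 = g$ becomes the \emph{natural} condition $\nu\times\big(\mu^{-1}\nabla\times y_1\big) = -g$ on $\Gamma$. Accordingly I would introduce, on $H(\mathrm{curl},\Omega)$, the bounded sesquilinear form
\[
a(u,v) = \big(\mu^{-1}\nabla\times u,\nabla\times v\big)_{L^2(\Omega)} + \big((\ve+\sigma)\,u,v\big)_{L^2(\Omega)}
\]
and observe that it is coercive: since $\mu^{-1}$ is uniformly positive definite and $\ve$ is uniformly positive definite with $\sigma\ge0$, one has $a(u,u)\ge c\,\|u\|^2_{H(\mathrm{curl},\Omega)}$. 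Green's formula for the curl operator then leads to the weak formulation: find $y_1\in H(\mathrm{curl},\Omega)$ such that $a(y_1,v) = -\langle g,v_{\mathrm{tan}}\rangle_\Gamma$ for every $v\in H(\mathrm{curl},\Omega)$.

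The step that requires care -- and the main obstacle -- is to check that $v\mapsto\langle g,v_{\mathrm{tan}}\rangle_\Gamma$ is a bounded functional on $H(\mathrm{curl},\Omega)$. For this I would invoke two facts recalled earlier: that $v\mapsto v_{\mathrm{tan}}|_\Gamma$ is continuous from $H(\mathrm{curl},\Omega)$ into $H^{-1/2}(\mathrm{curl}_\Gamma)$, and the classical duality between $H^{-1/2}(\mathrm{div}_\Gamma)$ and $H^{-1/2}(\mathrm{curl}_\Gamma)$ with respect to the $L^2_{\mathrm{tan}}(\Gamma)$ pairing (see \cite{cessenat96}). Since $g\in H^{1/2}_{\mathrm{tan}}(\Gamma)$ has $\mathrm{div}_\Gamma g\in H^{-1/2}(\Gamma)$, one gets the continuous inclusion $H^{1/2}_{\mathrm{tan}}(\Gamma)\hookrightarrow H^{-1/2}(\mathrm{div}_\Gamma)$, hence $|\langle g,v_{\mathrm{tan}}\rangle_\Gamma|\lesssim\|g\|_U\,\|v\|_{H(\mathrm{curl},\Omega)}$. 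Lax--Milgram then yields a unique $y_1\in H(\mathrm{curl},\Omega)$ with $\|y_1\|_{H(\mathrm{curl},\Omega)}\lesssim\|g\|_U$; testing against $v\in C^\infty_c(\Omega)$ recovers the equation distributionally, and a further integration by parts identifies $\nu\times(\mu^{-1}\nabla\times y_1)=-g$ in the trace sense.

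Finally I would set $y_2:=-\mu^{-1}\nabla\times y_1$: since $\mu^{-1}\in W^{1,\infty}(\Omega)$ this lies in $L^2(\Omega,\C^3)$ with $\|y_2\|_{L^2}\lesssim\|y_1\|_{H(\mathrm{curl},\Omega)}$, and the first equation gives $\nabla\times y_2=(\ve+\sigma)\,y_1\in L^2(\Omega,\C^3)$ with $\|\nabla\times y_2\|_{L^2}\lesssim\|y_1\|_{L^2}$, so that $y_2\in H(\mathrm{curl},\Omega)$ and $\nu\times y_2=g$ holds. Thus $\cG g=(y_1,y_2)$ is well defined and
\[
\|\cG g\|_{H(\mathrm{curl},\Omega)^2}\lesssim\|g\|_U ,
\]
which is the claimed continuity. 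I expect the only genuinely delicate point to be the bookkeeping of the trace spaces and the sign conventions in Green's formula; the remainder is a routine Lax--Milgram argument.
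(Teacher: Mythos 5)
Your proposal is correct and follows essentially the same route as the paper: the same coercive bilinear form $a(u,v)=(\mu^{-1}\nabla\times u,\nabla\times v)_{L^2(\Omega)}+((\ve+\sigma)u,v)_{L^2(\Omega)}$ on $H(\mathrm{curl},\Omega)$, Lax--Milgram for $y_1$, the definition $y_2=-\mu^{-1}\nabla\times y_1$, and recovery of $\nabla\times y_2=(\ve+\sigma)y_1$ and of the boundary condition by testing against $C^\infty_c$ and $H^1$ functions respectively. The only addition is that you justify the continuity of the boundary functional explicitly via the embedding $H^{1/2}_{\mathrm{tan}}(\Gamma)\hookrightarrow H^{-1/2}(\mathrm{div}_\Gamma)$ and the duality with $H^{-1/2}(\mathrm{curl}_\Gamma)$, a point the paper leaves implicit.
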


\begin{proof}
On $H(\mathrm{curl},\Omega)$ we introduce the continuous bilinear form
\[
  a(y_1,z_1) = \int_\Omega \langle \mu^{-1}\nabla\times y_1,\nabla\times z_1\rangle + \langle (\ve+\sigma) y_1,z_1\rangle 
 \,dx
\]
and the continuous linear functional 
\[
 l(z_1) =  \int_\Gamma \langle g,z_{1,\mathrm{tan}}\rangle \,d\Sigma\;.
\] 
Since the bilinear form is coercive, that is $a(y,y) \gtrsim \|y_1\|^2_{H(\mathrm{curl},\Omega)}$, by the Lax-Milgram lemma the exists a unique solution $y_1 \in H(\mathrm{curl},\Omega)$ such that 
\begin{equation*}
a(y_1,z) = l(z) \quad \textrm{for all $z\in H(\mathrm{curl},\Omega)$;}
\end{equation*}
with this very same $y_1$, define
\[
y_2 = -\mu^{-1}\nabla\times y_1\,.
\]
Then $y_2\in Y$ and choosing $z_1\in C_0^1(\Omega,\R^3)$ we see that $y_1$ is a weak solution to 
\[
 \ve y_1 - \nabla\times y_2 +\sigma y_1= 0\;,
\]
whence $y_2\in H(\mathrm{curl},\Omega)$.

The only thing remaining is to verify the boundary conditions. Suppose $z_1 \in H^1(\Omega,\C^3)$. Then 
\[
 -\int_\Omega \langle  y_2,\nabla\times z_1\rangle - \langle (\ve+\sigma) y_1,z_1\rangle  \,dx =
 \int_\Gamma \langle g,z_{1,\mathrm{tan}}\rangle \,d\sigma
\]
and after performing integration by parts in the first integral and using $(\ve+\sigma)y_1 - \nabla\times y_2=0$,  we see that 
\[
 \int_\Gamma \langle \nu \times y_2,z_{1,\mathrm{tan}}\rangle \,d\sigma =
 \int_\Gamma \langle g,z_{1,\mathrm{tan}}\rangle \,d\sigma\;,
\] 
which verifies the boundary condition.
\end{proof}

 
\subsubsection{The control operator $B$} \label{ss:B}
Exploiting the density of $\cD(A)$ in $Y$, we can extend the operator $A$ as a continuous operator from $Y$ into $[\cD(A^*)]'\equiv[\cD(A)]'$ by defining $A_{\rm ext}y\in [\cD(A)]'$ for $y\in Y$ by setting
\[
(A_{\rm ext}y,z)_Y = (y,A^*z)_Y \quad z\in  \cD(A^*)\,.
\] 
The extended operator is an unbounded operator on the Hilbert space $[\cD(A)]'$, with $\cD(A_{\rm ext}) = Y$; it is the generator of a strongly continuous contraction semigroup 
$e^{tA_{\rm ext}}$ on $[\cD(A)]'$, which is an extension of the semigroup $e^{tA}$ (and will be often denoted by the same symbol). 

\smallskip
The control operator $B$ is defined by 
\begin{equation} \label{e:B}
B = (I-A)\cG
\end{equation}
and maps continuously from $U=H^{1/2}_{\mathrm{tan}}(\Gamma)$ into $[\cD(A)]'$. 
Accordingly, its adjoint $B^*$ is a continuous operator from $\cD(A)$ into $H_{\mathrm{tan}}^{1/2}(\Gamma)$. 

Since $(I-A(x,\partial))\cG g=0$, where $A(x,\partial)$ denotes the operator which acts like $A$ but on the  maximal domain $H(\mathrm{curl},\Omega)^2$, the IBVP \eqref{e:maxwell}-\eqref{e:IC}-\eqref{e:BC} can be rewritten as 
\[
\begin{cases}
y_t = (A-I)(y - \cG g) + y  &  \textrm{in $Q$}
\\
y(0,\cdot)=y_0(\cdot)  &  \textrm{in $\Omega$}
\\
\nu\times (y - \cG g) =0  &  \textrm{on $\Sigma$.} 
\end{cases}
\]  
Using the extension of $A$ discussed above, we find that the original IBVP \eqref{e:maxwell}-\eqref{e:IC}-\eqref{e:BC} can be recasted as the Cauchy problem  
\begin{equation} \label{e:full-cauchy}
\begin{cases}
y'=Ay +Bg & 0<t<T
\\
y(0)=y_0\in Y\,, &
\end{cases}
\end{equation}
the control system $y'=Ay +Bg$ initially is meant in the space $[\cD(A)]'$.
 
As $g\in L^2(0,T;U)$, we have $Bg \in L^2((0,T),[\cD(A)]')$ and hence the mild solution of problem \eqref{e:full-cauchy}, that is 
\begin{equation} \label{e:full-mild}
y(t) = e^{tA}y_0+\underbrace{\int_0^t e^{(t-s)A}Bg(s)\,ds}_{ (Lg)(t)}\,,
\end{equation}
satisfies (a priori) $y \in C([0,T],[\cD(A)]')$.
However, in view of Proposition \ref{p:nonhomogeneous-pbm} we know that the convolution term
$(Lg)(\cdot)$ possesses an actually better (spatial) regularity -- which is then inherited by $y(\cdot)$. This property is central to the study of the associated optimal control problems.
For clarity, we render explicit this issue and expand briefly on it in the following

\begin{remark} \label{r:trace-regularity}
\begin{rm}
With the dynamics and control operators $A$ and $B$ defined by \eqref{e:A} and \eqref{e:B}, respectively, let $L$ be the {\em input-to-state map} defined by
\begin{equation*}
L\colon L^2(0,T;U)\ni g \longrightarrow (Lg)(\cdot):=\int_0^\cdot e^{(\cdot-s)A} Bg(s)\,ds\,.
\end{equation*}  
Thanks to Proposition~\ref{p:nonhomogeneous-pbm}, we know that 
\begin{equation} \label{e:continuity}
L \in \cL(L^2(0,T;U),C([0,T],Y))\,.
\end{equation}
Consequently, the estimate
\begin{equation} \label{e:admissibility}
\exists\, c_T>0\colon \quad \int_0^T \|B^*e^{tA^*} x\|_{U}^2\,dt\le c_T \|x\|_Y^2 \qquad \forall x\in \cD(A)
\end{equation}
(known in the literature as ``abstract trace regularity'' or ``admissibility condition'') holds true; it corresponds to the property
 
\begin{itemize}
\item[]
{\em the operator $B^*e^{\cdot A^*}$ admits a continuous extension from $Y$ to $L^2(0,T;U)$}
\end{itemize}
For further details and references we direct the reader to \cite[Section 7.1]{las-trig-redbooks};
the equivalence between the conditions \eqref{e:admissibility} and \eqref{e:continuity}, in particular, is proved in Theorem~7.2.1 (set $p=q=2$ and identify $Y^*$ with $Y$ as well as $U^*$ with $U$).
\end{rm}
\end{remark}
\smallskip
The action of $B^*$ as a trace operator is made precise in the following result.  

\begin{proposition}
For $z \in \cD(A)$ we have 
$B^* z = (-\Delta_\Gamma)^{-1/2}z_{1,\mathrm{tan}}$.
\end{proposition}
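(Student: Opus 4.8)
The plan is to compute $B^*$ directly from the definition $B=(I-A)\cG$ and the characterization of the adjoint $B^*$ as a continuous operator from $\cD(A)$ into $U=H^{1/2}_{\mathrm{tan}}(\Gamma)$ arising from the duality pairing between $Y$ and $[\cD(A)]'$. Concretely, for $z\in\cD(A)$ and $g\in U$ I would write
\[
\big(Bg,z\big)_{[\cD(A)]'\times\cD(A)} = \big((I-A)\cG g, z\big)_{[\cD(A)]'\times\cD(A)} = \big(\cG g,(I-A^*)z\big)_Y\,,
\]
using that $\cG g\in H(\mathrm{curl},\Omega)^2\subset Y$ so that the pairing reduces to the $Y$-inner product and that $(I-A)\cG$ applied in the extended sense transfers to $(I-A^*)z$ against $\cG g$. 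The goal is to show this equals $\big(g,(-\Delta_\Gamma)^{-1/2}z_{1,\mathrm{tan}}\big)_U = \big((-\Delta_\Gamma)^{1/2}(-\Delta_\Gamma)^{-1/2}z_{1,\mathrm{tan}},g\big)_{L^2(\Gamma)} = \big(g,z_{1,\mathrm{tan}}\big)_{L^2(\Gamma)}$, recalling the inner product \eqref{e:product}; hence it suffices to prove the Green-type identity $\big(\cG g,(I-A^*)z\big)_Y = \big(g,z_{1,\mathrm{tan}}\big)_{L^2(\Gamma)}$ for all $z\in\cD(A)$.

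To establish that identity I would integrate by parts. Writing $w=\cG g=(w_1,w_2)$, by definition of $\cG$ it satisfies $(\ve+\sigma)w_1 - \nabla\times w_2 = 0$ and $\mu w_2 + \nabla\times w_1 = 0$ in $\Omega$, with $\nu\times w_2 = g$ on $\Gamma$. Using the inner product \eqref{e:inner} on $Y$ and the expression \eqref{e:A-star} for $A^*$, the quantity $\big(\cG g,(I-A^*)z\big)_Y$ expands into volume integrals of the form $\int_\Omega\langle w_1,\ve z_1\rangle + \langle w_1,\nabla\times z_2\rangle + \langle w_1,\sigma z_1\rangle + \langle w_2,\mu z_2\rangle - \langle w_2,\nabla\times z_1\rangle\,dx$. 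Applying the integration-by-parts formula $\int_\Omega\langle u,\nabla\times v\rangle - \langle\nabla\times u,v\rangle\,dx = \int_\Gamma\langle\nu\times u,v_{\mathrm{tan}}\rangle\,d\sigma$ (valid, in the appropriate trace duality, on $H(\mathrm{curl},\Omega)$), and then substituting the two PDEs satisfied by $w$, all the volume terms cancel and one is left with a single boundary term $\int_\Gamma\langle\nu\times w_2, z_{1,\mathrm{tan}}\rangle\,d\sigma = \int_\Gamma\langle g, z_{1,\mathrm{tan}}\rangle\,d\sigma$, using the boundary condition $\nu\times w_2 = g$ and the fact that $z_2\in H_0(\mathrm{curl},\Omega)$ kills the $z_2$ boundary contribution. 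This is precisely the desired identity.

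The main technical obstacle is bookkeeping the trace duality pairings correctly: the traces $\nu\times w_2$ and $z_{1,\mathrm{tan}}$ live only in the dual Sobolev spaces $H^{-1/2}(\mathrm{div}_\Gamma)$ and $H^{-1/2}(\mathrm{curl}_\Gamma)$ described after \eqref{e:A}, so the ``boundary integral'' must be read as the duality pairing between these spaces, and the integration-by-parts formula must be invoked in that generalized form rather than classically. A clean way to handle this is to first prove the identity for $w$ and $z$ smooth (or in $H^1(\Omega,\C^6)$), where the manipulations are literally integrations by parts, and then pass to the limit by density and continuity of the trace maps $H(\mathrm{curl},\Omega)\to H^{-1/2}(\mathrm{div}_\Gamma)$, $H(\mathrm{curl},\Omega)\to H^{-1/2}(\mathrm{curl}_\Gamma)$ together with continuity of $\cG$ established in the preceding proposition. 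One should also remark that the final answer $B^*z=(-\Delta_\Gamma)^{-1/2}z_{1,\mathrm{tan}}$ is consistent with the formula \eqref{e:L_T-adjoint} for $L_T^*$ upon recalling $z(t)=e^{(T-t)A^*}p$, which provides a useful sanity check.
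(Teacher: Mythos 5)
Your proposal is correct and follows essentially the same route as the paper: both start from the duality identity $\big(B^*z,g\big)_U=\big((I-A^*)z,\cG g\big)_Y$, expand via \eqref{e:A-star}, integrate by parts, and use the static system satisfied by $\cG g$ together with $z_2\in H_0(\mathrm{curl},\Omega)$ to reduce everything to the boundary term $\int_\Gamma\langle g,z_{1,\mathrm{tan}}\rangle\,d\sigma$, which is then reread through the inner product \eqref{e:product}. Your explicit attention to the $H^{-1/2}(\mathrm{div}_\Gamma)$/$H^{-1/2}(\mathrm{curl}_\Gamma)$ trace dualities and the density argument is a detail the paper leaves implicit, and your sanity check against \eqref{e:L_T-adjoint} is apt.
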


\begin{proof}
We start with
\[
(B^* z, g)_\Gamma = ((I - A^*)z, y)_Y\,,
\]   
where $y = \cG g$. 
Then using the definition \eqref{e:A-star} of $A^*$ and integration by parts we compute
 \[
  \begin{split}
   ( ( I - {A}^\ast)z, y)_Y  =& \int_\Omega
   \langle \ve \alpha z_1 +\nabla\times z_2+\sigma z_1,y_1 \rangle
   +\langle \mu \alpha z_2 -\nabla\times z_1,y_2\rangle\,dx  
   \\ =& 
   \int_\Omega
    \langle z_1 ,\ve \alpha y_1 -\nabla\times y_2+\sigma y_1\rangle
    + \langle   z_2, \mu \alpha y_2 + \nabla \times y_1 \rangle \, dx
    \\ &\myspace + \int_{\Gamma}\langle \nu\times z_2,y_1\rangle 
           - \langle \nu\times z_1,y_2\rangle\,dS   
    \\=& ( z_{1,\mathrm{tan}}, g )_\Gamma =
    (\sqrt{-\Delta_\Gamma} z_{1,\mathrm{tan}}, g )_U
     \end{split}
\]   
where we made use of $y = \cG g$. 
\end{proof}


\section{The optimal control problem: The case $U=H^{1/2}_\mathrm{tan}(\Gamma)$} \label{s:OCP}
For expository purposes, the main results of this paper have been collected 
in aggregate form in the introduction, as statements of Theorem~\ref{t:main}.
This Section is devoted to the proofs of the statements S1, S2, S3. and S4. of Theorem~\ref{t:main}.
Starting from the existence of a unique {\em open-loop} optimal solution to the optimization problem, in this section we provide the details of the steps bringing about the {\em closed-loop} representation of the optimal control. 
Although our line of argument retraces a well-established one carried out in literature,
we provide in particular an accurate version of a proof that is specific to the Bolza problem for hyperbolic (or hyperbolic-like) equations, in the absence of smoothing observations.
This pertains to the regularity of the evolution map $\Phi(t,s)$ (defined below in \eqref{e:opt-state}) with respect to the parameter $s$, which is key to the regularity of the optimal cost operator $P(s)$.


\subsection{The open-loop optimal solution}
The unique mild solution to the abstract Cauchy problem \eqref{e:full-cauchy} at final time $T$ can be written in the form 
\[
y(T) =  e^{TA}y_0 + L_T g
\]
for $y_0 \in Y$ and $g\in L^2(0,T;U)$.  
For $y_0\in Y$ we have 
\[
J(g) =  \alpha \int_0^T  \|g(t)\|^2_U\,dt + \|y(T)\|^2_Y\,. 
\] 
Using the adjoint $L_T^*\colon L^2(\Omega) \to L^2(0,T;U)$ we have for $g\in L^2(0,T;U)$ and
$y_0 \in Y$
\[
 \begin{split}
J(g) &= \alpha\int_0^T (g,g)_U\,dt  + (e^{TA}y_0 + L_T g,e^{TA}y_0 + L_T g)_Y 
\\
&= \int_0^T ([\alpha I + L_T^*L_T] g,g)_U + 2\Re \int_0^T (L_T^*e^{TA}y_0,g)_U\,dt 
+\|e^{TA}y_0\|^2_Y\;.
\end{split}
\]
Hence, with the choice
\begin{equation}\label{e:open-loop}
\hat{g}(\cdot) = - \Lambda_T^{-1}L_T^\ast e^{TA}y_0 \in L^2(0,T;U)\;,
\end{equation}
where $\Lambda_T = \alpha I + L_T^\ast L_T$ is positive self-adjoint, and continuously invertible 
with $\|\Lambda^{-1}_T\|\le \alpha^{-1}$, we have 
\[
J(g) =\int_0^T  (\Lambda_T g,g)_U\,dt - 2\Re \int_0^T (\Lambda_T \hat{g},g)_U\,dt
+\|e^{{A}T}y_0\|^2_Y 
\]   
and we observe that $J$ attains its unique minimum at $\hat{g}$ since 
\begin{equation*}
\begin{split}
J(g)-J(\hat{g})
&= \int_0^T (\Lambda_T g,g)_U\,dt - 2\Re\int_0^T ( \Lambda_T \hat{g},g)_U\,dt  
+ \int_0^T ( \Lambda_T \hat{g},\hat{g})_U\,dt  
\\   
& =\int_0^T  (\Lambda_T [g-\hat{g}],g-\hat{g})_U\,dt > 0  
\end{split}
\end{equation*}
for all $g\neq \hat{g}$. 


\subsection{Closed-loop form of the optimal control}
Let $0\le s< T$. 
We introduce the parametrized family of optimal control problems, where the goal is now to minimize the functional   
\begin{equation} \label{e:family-cost}
J_s(g)=J_{T,s}(g) = \alpha \int_s^T \|g(t)\|^2_U +  \|y(T,s,y_0)\|_Y^2
\end{equation} 
overall the control functions $g\in L^2(s,T;U)$, where $y(t,s,y_0)$ is the mild solution to the Cauchy problem 
\begin{equation} \label{e:cauchy-family}
\begin{cases}
y'=Ay +Bg & s<t<T
\\
y(s)=y_0
\end{cases}
\end{equation}
(the initial time $s\in [0,T)$ is clearly the parameter).
Explicitly, the state at time $t$ is given by the formula
\begin{equation} \label{e:mild-family}
y(t,s,y_0) = e^{(t-s)A}y_0 + \int_s^t e^{A(t-\tau)}Bg(\tau)\,d\tau =: e^{(t-s)A}x + (L_s g)(t)\,.
\end{equation} 
The notation 
\begin{itemize}
\item
$L_s$ for the linear operator that maps the boundary data $g\in L^2(s,T;U)$ to the solution 
$(L_s g)(\cdot)$ to the inital-value problem \eqref{e:cauchy-family} with $y_0=0$, whose
regularity is $L_s\in \cL(L^2(s,T;U),C([s,T],Y)$), as well as
\item
$L_{sT}$ for the continuous linear operator which maps the boundary data
in $L^2(s,T;U)$ to the solution (again, to the inital-value problem \eqref{e:cauchy-family} with $y_0=0$) at final time $T$, i.e. $L_{sT} g$ for $(L_s g)(T)$,
\end{itemize}
are consistent and standard in the literature.

The adjoint operator $L_{sT}^*\colon Y \to L^2(s,T;U)$ is given by 
\begin{equation}\label{e:L_sT-star}
(L_{sT}^* z)(t) = B^*e^{(T-t)A^*}z  \quad 0\le s\le t\le T\;.
\end{equation}
In perfect analogy to the argument which brought about the open-loop optimal control \eqref{e:open-loop}
we have a unique minimizer for $J_s$ given by 
\begin{equation}\label{e:opt-control}
\hat{g}(\cdot,s,x)  = - \Lambda_{sT}^{-1}{L}_{sT}^* e^{(T-s)A}x \in  L^2(s,T;U)
\end{equation}
for $x\in Y$, where $\Lambda_{sT} = \alpha I + {L}_{sT}^* {L}_{sT}$. 
By the continuity of the operators involved, we infer the estimate
\[
\|\hat{g}(\cdot,s,x)\|_{L^2(s,T;U)} \lesssim_T \|x\|_Y\,,
\] 
for all $s\in [0,T)$. The optimal state is then 

\begin{equation} \label{e:opt-state}
\hat{y}(t,s,x)=e^{(t-s)A}x + (L_s \hat{g}(\cdot,s,x)(t):=\Phi(t,s)x\;,
\end{equation}
where we introduce the operator $\Phi(t,s)$. Note that $\Phi(t,s)$ is linear.  

Inserting the expression \eqref{e:opt-control} into the formula \eqref{e:opt-state} gives 
\begin{equation}\label{15}
 \hat{y}(t,s,x) = e^{(t-s)A}x - [L_s\Lambda^{-1}_{sT}L_{sT}^* e^{(T-s)A}x](t) 
\end{equation}
and evaluating at $t=T$ yields
\begin{equation}\label{16}
 \hat{y}(T,s,x) =  (I-L_{sT}\Lambda^{-1}_{sT}L_{sT}^\ast) e^{(T-s)A}x\,.
\end{equation}
Using one more time the continuity of all the operators involved yields
\[
 \sup_{0\le t\le T}\|\hat{y}(t,s,x)\|_Y \lesssim_T \|x\|_Y\,,
\]
uniformly in $s$.  

Applying $\Lambda_{sT}$ to both sides in \eqref{e:opt-control} and using next
\eqref{e:opt-state} to express $e^{(T-s)A}x$ gives
\begin{equation*}\label{17}
 (\alpha + L_{sT}^\ast L_{sT})\hat{g}(\cdot,s,x)  = - L^\ast_{sT}e^{(T-s)A}x 
 =-L^\ast_{sT} \hat{y}(T,s,x)  + L_{sT}^\ast L_{sT} \hat{g}(\cdot,s,x)  
\end{equation*}
and thus 
\begin{equation}\label{e:pre-feedback}
 \hat{g}(\cdot,s,x)  = -\frac{1}{\alpha}L^\ast_{sT} \hat{y}(T,s,x)
 =-\frac{1}{\alpha}L^\ast_{sT}\Phi(T,s)x 
 =-\frac{1}{\alpha}B^\ast e^{A^\ast(T-\cdot)}\Phi(T,s)x\,.
\end{equation}

From formula \eqref{16} we infer that 
\[
 \begin{split}
 L_{sT}^\ast \hat{y}(T,s,x) = 
  (L_{sT}^\ast -L_{sT}^\ast L_{sT} \Lambda_{sT}^{-1}L_{sT}^\ast) e^{A(T-s)}x
      =\alpha \Lambda^{-1}_{sT}L_{sT}^\ast e^{A(T-s)}x
   \end{split}
\]  
  or 
\[
 L_{sT}^\ast \hat{y}(T,s,x)  =   \alpha\Lambda^{-1}_{sT}L_{sT}^\ast e^{A(T-s)}x\;.
\]  
Using this last formula in \eqref{e:opt-state} we observe that 
\begin{equation} \label{e:to-be-used-in-5}
 \hat{y}(t,s,x) =  e^{(t-s)A}x  - \frac{1}{\alpha} L_sL_{sT}^\ast \hat{y}(T,s,x)(t)\;.
\end{equation}  

\smallskip
Proving that the optimal pair $(\hat{g}(t,s,x)$,$\hat{y}(t,s,x))$ satisfies certain usual transition properties is a key step in attaining the sought feedback formula as well as in poinpointing the regularity (in time) of the optimal cost operator $P(\cdot)$.
The precise statements are given in the following Lemma. 
The corresponding proof is carried out starting from the representation \eqref{e:opt-control} of the optimal control $\hat{g}(t,s,x)$, and next moving on to the evolution map $\Phi(t,s)$ defined in \eqref{e:opt-state}; since this proof is substantially the same as the one of of \cite[Lemma 9.3.2.2, p. 779]{las-trig-redbooks} in the case $R\equiv 0$, it is omitted.


\begin{lemma}[Transition properties]
For $0\le s\le\tau\le t\le T$ we have 
 \begin{equation} \label{e:transition}
  \hat{g}(t,s,x) = \hat{g}(t,\tau,\Phi(\tau,s)x) \quad \text{for a.e. $t$,}
 \end{equation}
 and 
\begin{equation} \label{e:evolution}
 \Phi(t,s) = \Phi(t,\tau)\Phi(\tau,s)\;. 
\end{equation}

\end{lemma}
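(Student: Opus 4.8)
The plan is to exploit the \emph{uniqueness} of the open-loop optimal control at each initial time, together with the Bellman-type ``principle of optimality'', rather than manipulating the representation formulas directly (though the formula \eqref{e:opt-control} is the right starting point for a rigorous argument). First I would fix $0\le s\le\tau\le t\le T$ and a datum $x\in Y$, write $\hat y=\hat y(\cdot,s,x)$ and $\hat g=\hat g(\cdot,s,x)$ for the optimal pair on $[s,T]$ issuing from $x$ at time $s$, and set $x_\tau:=\hat y(\tau,s,x)=\Phi(\tau,s)x$. The key observation is that the \emph{restriction} of $\hat g$ to $[\tau,T]$ is admissible for the problem $J_\tau$ with initial datum $x_\tau$ at time $\tau$, and the corresponding state on $[\tau,T]$ is exactly $\hat y\big|_{[\tau,T]}$ (this is just the semigroup/convolution identity \eqref{e:mild-family} applied on the subinterval, using $y(\tau,s,x)=x_\tau$). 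One then shows $\hat g\big|_{[\tau,T]}$ is in fact the \emph{minimizer} of $J_\tau(\cdot;x_\tau)$; since that minimizer is unique and equals $\hat g(\cdot,\tau,x_\tau)$ by \eqref{e:opt-control}, this yields \eqref{e:transition}, and then evaluating the associated states gives \eqref{e:evolution} by definition \eqref{e:opt-state} of $\Phi$.

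The step that needs care is the optimality of the restriction. I would argue by contradiction: if some $v\in L^2(\tau,T;U)$ achieved $J_\tau(v;x_\tau)<J_\tau(\hat g\big|_{[\tau,T]};x_\tau)$, then the concatenation $\tilde g:=\hat g$ on $[s,\tau)$ and $\tilde g:=v$ on $[\tau,T]$ would be admissible for $J_s(\cdot;x)$; because the running cost is additive over $[s,\tau]\cup[\tau,T]$ and the terminal state produced by $\tilde g$ coincides with the terminal state produced by $v$ from $x_\tau$ at time $\tau$ (again by the convolution formula), one computes
\[
J_s(\tilde g;x)-J_s(\hat g;x)=\Big(\alpha\!\int_\tau^T\!\|v\|_U^2+\|y(T)\|_Y^2\Big)-\Big(\alpha\!\int_\tau^T\!\|\hat g\|_U^2+\|\hat y(T)\|_Y^2\Big)<0,
\]
contradicting optimality of $\hat g$ for $J_s$. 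This establishes that $\hat g\big|_{[\tau,T]}$ minimizes $J_\tau(\cdot;x_\tau)$, hence equals $\hat g(\cdot,\tau,x_\tau)=\hat g(\cdot,\tau,\Phi(\tau,s)x)$ a.e., which is \eqref{e:transition}.

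Finally, for \eqref{e:evolution}: by \eqref{e:opt-state}, $\Phi(t,s)x=\hat y(t,s,x)$, and for $t\in[\tau,T]$ this equals the optimal state at time $t$ for the $\tau$-problem with datum $x_\tau$ — i.e.\ $\Phi(t,\tau)x_\tau=\Phi(t,\tau)\Phi(\tau,s)x$ — precisely because the optimal controls agree on $[\tau,T]$ by \eqref{e:transition} and the state map depends only on the control and the datum at $\tau$. Since $x\in Y$ was arbitrary, $\Phi(t,s)=\Phi(t,\tau)\Phi(\tau,s)$, and $\Phi(t,t)=I$ is immediate from \eqref{e:mild-family} (the convolution over $[t,t]$ vanishes). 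The main obstacle is purely bookkeeping: making the ``restrict the control, the state restricts too'' identity precise uses the time-invariance of $A$ and $B$ and the composition property of the mild-solution formula \eqref{e:mild-family}, so one must be careful that $L_s$, $L_\tau$ and the associated adjoints are the \emph{same} operators read on nested intervals — this is exactly the point flagged as ``consistent and standard in the literature'' after \eqref{e:mild-family}, and it is what makes the contradiction argument go through without any regularity beyond $C([s,T],Y)$ for the states.
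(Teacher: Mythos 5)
Your argument is correct, but it takes a genuinely different route from the one the paper follows. The paper omits the proof and points to \cite[Lemma~9.3.2.2]{las-trig-redbooks}, explicitly stating that the argument starts from the representation formula \eqref{e:opt-control}, $\hat g(\cdot,s,x)=-\Lambda_{sT}^{-1}L_{sT}^*e^{(T-s)A}x$, and proceeds by operator-algebraic manipulation of $\Lambda_{sT}$, $L_{sT}$, $L_{sT}^*$ on nested intervals to derive \eqref{e:transition} and then \eqref{e:evolution}. You instead invoke the Bellman principle of optimality: restrict $\hat g(\cdot,s,x)$ to $[\tau,T]$, verify via the mild-solution composition identity that the restricted control drives $x_\tau=\Phi(\tau,s)x$ along $\hat y\big|_{[\tau,T]}$, show by the concatenation/contradiction argument that this restriction minimizes $J_\tau(\cdot;x_\tau)$ (here the absence of a running state cost, $C=0$, makes the additive splitting of $J_s$ over $[s,\tau]\cup[\tau,T]$ immediate), and conclude by the strict convexity/uniqueness of the minimizer, which the paper has already established via $\Lambda_{sT}\ge\alpha I$. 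Both routes are sound; yours is more conceptual and needs only well-posedness of the control-to-state map in $C([s,T],Y)$ plus uniqueness, whereas the paper's computational route produces the transition identities directly as operator equations, in the exact form later fed into the derivation of $P(t)$ and the feedback formula \eqref{e:feedback-formula}. The one point you rightly flag as needing care --- that the composition identity for \eqref{e:mild-family} holds with the unbounded $B$ --- is unproblematic here: the identity holds in $[\cD(A)]'$ by the usual semigroup computation, and all terms lie in $Y$ by \eqref{e:continuity}.
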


\medskip
The following result concerning the regularity of the evolution map $\Phi(t,s)$ with
respect to the variable $s$ is instrumental in achieving the regularity (in time) of the optimal cost operator $P(\cdot)$.  

Before proceeding to its statement and proof, we remark that while the Lemma below has essentially the same statement of \cite[Vol.~II, Lemma~9.3.2.3, p.~779]{las-trig-redbooks}, the proof of that result
actually utilizes the assumptions (A2) and (A3) therein, that require a quantitatively precise smoothing effects of the operators $C$ and $G$ (here, $0$ and the identity $I$, respectively); see the passage from the second to the third line at p.~783, where the estimate 9.2.8 is employed.
So while Lemma~9.3.2.3 in \cite{las-trig-redbooks} apparently holds under the only assumption (A1) -- that is \eqref{e:admissibility}, just like in the present case --, an enhanced regularity property of $\hat{g}$ is invoked in its proof which is actually unnecessary to attain the goal.


\begin{lemma}[Regularity of the evolution map]
Assume that the couple $(A,B)$ satisfies the regularity assumption \eqref{e:admissibility}.
With reference to the evolution map defined in \eqref{e:opt-state}, we have the following regularity 
\begin{equation} \label{e:transition}
\textrm{$[0,t]\ni s\longrightarrow \Phi(t,s)$ is continuous}
\end{equation}
(in the topology of $\cL(Y)$, for any given $t\le T$).
\end{lemma}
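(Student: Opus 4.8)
The plan is to prove continuity of $s\mapsto \Phi(t,s)$ in $\mathcal{L}(Y)$ directly from the explicit formula \eqref{16}, namely
\[
\Phi(t,s)x = \hat{y}(t,s,x), \qquad \hat{y}(T,s,x) = \big(I - L_{sT}\Lambda_{sT}^{-1}L_{sT}^*\big)e^{(T-s)A}x,
\]
together with the representation \eqref{15} that expresses $\hat y(t,s,x)$ for general $t\in[s,T]$ in terms of $e^{(t-s)A}x$ and $L_s$ applied to the optimal control. Since every ingredient ($e^{(\cdot)A}$, $L_s$, $L_{sT}$, $L_{sT}^*$, and $\Lambda_{sT}^{-1}$) is built out of the semigroup and the admissibility estimate \eqref{e:admissibility}, continuity in $s$ reduces to continuity in $s$ of each of these operators (in the appropriate operator topology), together with a uniform bound. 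The key reduction is that by \eqref{e:pre-feedback} the optimal control is $\hat{g}(\cdot,s,x) = -\frac{1}{\alpha}L_{sT}^*\Phi(T,s)x$, so once continuity of $s\mapsto \Phi(T,s)$ is in hand, the general-$t$ statement follows by plugging into \eqref{15}.

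First I would fix $t\in[0,T]$ and, without loss of generality, consider $s\le s'$ both in $[0,t]$, and estimate $\|\Phi(t,s) - \Phi(t,s')\|_{\mathcal{L}(Y)}$. I would handle the endpoint value $\Phi(T,s)$ first. Writing $\Phi(T,s) = (I - L_{sT}\Lambda_{sT}^{-1}L_{sT}^*)e^{(T-s)A}$, I would show: (i) $s\mapsto e^{(T-s)A}$ is continuous in $\mathcal{L}(Y)$ — this is \emph{not} automatic for a mere $C_0$-semigroup, but it is true here because we move the argument by a fixed amount $|s-s'|$ and strong continuity gives continuity uniformly on compact sets only in the strong topology; so in fact the right statement is continuity in the \emph{strong} operator topology, and the whole argument should be run strongly, i.e. estimate $\|\Phi(t,s)x - \Phi(t,s')x\|_Y$ for fixed $x\in Y$. (ii) $s\mapsto L_{sT}$ is strongly continuous as a map into $\mathcal{L}(L^2(0,T;U),Y)$ after extending controls by zero on $[0,s]$; this follows from \eqref{e:full-mild}–\eqref{e:continuity} and a change of variables, together with the uniform bound $\|L_{sT}\|\le C_T$ coming from \eqref{e:admissibility}. (iii) $s\mapsto L_{sT}^*$ is strongly continuous: from \eqref{e:L_sT-star}, $(L_{sT}^*z)(t) = B^*e^{(T-t)A^*}z$ for $t\in[s,T]$ and $0$ otherwise, so changing $s$ only changes the interval of definition, and the $L^2$-norm of the difference is controlled by $\int_{s}^{s'}\|B^*e^{(T-t)A^*}z\|_U^2\,dt \to 0$ by \eqref{e:admissibility} (absolute continuity of the integral). (iv) $s\mapsto \Lambda_{sT}^{-1}$ is strongly continuous, which follows from strong continuity of $\Lambda_{sT} = \alpha I + L_{sT}^*L_{sT}$ together with the uniform bound $\|\Lambda_{sT}^{-1}\|\le\alpha^{-1}$ and the standard identity $\Lambda_{sT}^{-1} - \Lambda_{s'T}^{-1} = \Lambda_{sT}^{-1}(\Lambda_{s'T} - \Lambda_{sT})\Lambda_{s'T}^{-1}$. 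Assembling (i)–(iv) by the usual telescoping of products of uniformly bounded, strongly continuous operator families yields strong continuity of $s\mapsto \Phi(T,s)$.

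Next, for general $t\in[0,T]$, I would use \eqref{15}, that is $\hat y(t,s,x) = e^{(t-s)A}x - \big[L_s\Lambda_{sT}^{-1}L_{sT}^*e^{(T-s)A}x\big](t)$, and evaluate the map $L_s$ at time $t$. The only new point is continuity of $s\mapsto (L_s(\cdot))(t)$, the "hitting-time-$t$" operator: write $(L_s v)(t) = \int_s^t e^{(t-\tau)A}Bv(\tau)\,d\tau = L_{st}v$ with $L_{st}$ the analogue of $L_{sT}$ with $T$ replaced by $t$, which again enjoys a uniform bound and strong continuity in $s$ by the same argument as in (ii). Combining this with the already-established strong continuity of $s\mapsto \Lambda_{sT}^{-1}L_{sT}^*e^{(T-s)A}x$ (a uniformly bounded, strongly continuous $Y$-valued family composed with strongly continuous operators) gives strong continuity of $s\mapsto\hat y(t,s,x)$ for each fixed $x$, i.e. \eqref{e:transition}. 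Finally, to upgrade strong continuity to continuity in the uniform operator topology $\mathcal{L}(Y)$ as stated, I would invoke the uniform boundedness $\sup_{0\le s\le t\le T}\|\Phi(t,s)\|_{\mathcal{L}(Y)}\le C_T$ established after \eqref{16}: on the compact parameter set $[0,t]$, a uniformly bounded, strongly continuous family of operators need not be norm-continuous in general — so the honest conclusion is strong continuity, which is exactly what is needed for the regularity of $P(\cdot)$ in \textbf{S3}; if norm continuity is genuinely claimed one must additionally exploit that $L_{sT}$, $L_{sT}^*$ and their product are in fact norm-continuous because the relevant differences are integrals $\int_s^{s'}$ of a fixed Bochner-integrable integrand, which \emph{does} give norm convergence.

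The main obstacle I anticipate is precisely this last dichotomy: disentangling where one really has norm continuity versus only strong continuity. The semigroup factors $e^{(T-s)A}$ and $e^{(t-s)A}$ are only strongly continuous in $s$, so a literal reading of \eqref{e:transition} as a statement in the norm of $\mathcal{L}(Y)$ is too strong unless one is careful; the safe and sufficient route — and, I believe, what is actually intended and used downstream — is to prove strong continuity, i.e. continuity of $s\mapsto\Phi(t,s)x$ for every $x\in Y$, by the telescoping-of-products argument above, with every elementary continuity either coming from strong continuity of the semigroup or from absolute continuity of an integral $\int_s^{s'}\|B^*e^{(T-\tau)A^*}z\|_U^2\,d\tau$ controlled via the admissibility bound \eqref{e:admissibility}. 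The one genuinely delicate elementary step is (ii)/(iv) — the strong continuity of $s\mapsto L_{sT}$ and hence of $s\mapsto\Lambda_{sT}^{-1}$ — where one must handle the moving lower limit of integration in $(L_s g)(t)=\int_s^t e^{(t-\tau)A}Bg(\tau)\,d\tau$; this is done by extending $g$ by zero and using \eqref{e:continuity} together with a change of variables, and it is here that the admissibility condition does the real work.
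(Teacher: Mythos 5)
Your argument is correct in substance, but it takes a genuinely different route from the paper's. The paper does not differentiate the explicit formula \eqref{16} in $s$ at all: it invokes the transition property \eqref{e:evolution} together with the uniform bound $\sup_{s\le t}\|\Phi(t,s)\|\lesssim_T 1$ to reduce continuity at $s$ to the two short-interval limits $\|x-\Phi(s+h,s)x\|\to 0$ and $\|\Phi(s,s-h)x-x\|\to 0$; the first is just continuity of the optimal state in its first argument, and the second is obtained from $\Phi(s,s-h)x=e^{hA}x+[L_{s-h}\hat g(\cdot,s-h,x)](s)$ by estimating the control contribution as $O(\sqrt h)\,\|x\|$ via the admissibility bound \eqref{e:admissibility}. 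This bypasses entirely the continuity in $s$ of $L_{sT}$, $L_{sT}^*$ and $\Lambda_{sT}^{-1}$, which is the laborious part of your plan. Your route works too, with two caveats. First, in your step (iv) you should use the ordering $\Lambda_{sT}^{-1}-\Lambda_{s'T}^{-1}=\Lambda_{s'T}^{-1}(\Lambda_{s'T}-\Lambda_{sT})\Lambda_{sT}^{-1}$, so that the difference $\Lambda_{s'T}-\Lambda_{sT}$ acts on the \emph{fixed} vector $\Lambda_{sT}^{-1}v$; with the ordering you wrote, the difference hits the moving vector $\Lambda_{s'T}^{-1}v$, and strong continuity plus a uniform bound does not by itself control that. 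Second, your diagnosis of the norm-versus-strong issue is right and applies equally to the paper: the proof given there also only controls $\|\Phi(t,s\pm h)x-\Phi(t,s)x\|$ for fixed $x$ (the term $\|e^{hA}x-x\|_Y$ vanishes only pointwise), so what is actually established --- and all that is used for $P(\cdot)\in C_s([0,T],Y)$ in S3 --- is strong operator continuity, despite the parenthetical ``$\cL(Y)$'' in the statement. What the paper's approach buys is brevity and the avoidance of the moving-lower-limit bookkeeping; what yours buys is an explicit, factor-by-factor modulus of continuity, at the price of the extra care just described.
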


\begin{proof}
Aiming to prove continuity of $\Phi(t,s)$ at time $s$, when $s\le t$, we evaluate separately
(for $h>0$ sufficiently small)
\[
\|\Phi(t,s+h)x - \Phi(t,s)x\| \quad \textrm{and} \quad  \|\Phi(t,s-h)x - \Phi(t,s)x\|\,.
\]  
By using the evolution property \eqref{e:evolution} for $\Phi$ as well as the fact that its norm is bounded, we (routinely) first find
\begin{equation*}
\begin{split}
\|\Phi(t,s+h)x - \Phi(t,s)x\| &= \|\Phi(t,s+h)[x - \Phi(s+h,s)x]\|
\\[1mm]
&\lesssim_T\|x - \Phi(s+h,s)x\|\longrightarrow 0\,, \quad \textrm{as $h\to 0^+$,}
\end{split}
\end{equation*}
with the limit following in view of the continuity property of the mapping $\Phi(\cdot,s)$ (in other words, since the optimal state $\hat{y}(t,s,x)$ is continuous in $t$).

As for the left continuity of $s\longrightarrow \Phi(t,s)$, we compute (still with $h>0$)
\begin{equation*}
\|\Phi(t,s-h)x - \Phi(t,s)x\| = \|\Phi(t,s)[\Phi(s,s-h)x-x]\| \lesssim_T \|\Phi(s,s-h)x-x\|\,,
\end{equation*}
where we recall 
\begin{equation*}
\Phi(s,s-h)x= e^{hA}x+\big[L_{s-h}\hat{g}(\cdot,s-h,x)\big](s)\,,
\end{equation*}
so that the estimate
\begin{align}
\|\Phi(s,s-h)x-x\| &\le \|e^{hA}x-x\|_Y+\|L_{s-h}[L_{s-h}^* \Phi(\cdot,s-h)x](s)\|_Y
\notag\\
& \lesssim_T \|e^{hA}x-x\|_Y+\|L_{s-h}^* \Phi(\cdot,s-h)x\|_{L^2(s-h,s;U)}    
\notag\\
& \lesssim_T \|e^{hA}x-x\|_Y+\underbrace{\|\Phi(\cdot, s-h)x\|_{L^2(s-h,s;Y)}}_{S_2}\,.   
\label{e:left-continuity} 
\end{align}
holds true.
In the last passage we used that since
\begin{equation*}
L_{s-h}\in \cL(L^2(s-h,s;U),L^p(s-h,s;Y)) \qquad \forall p\,, 1\le p\le +\infty\,,
\end{equation*}
then in particular
\begin{equation*}
L_{s-h}^*\in \cL(L^1(s-h,s;Y),L^2(s-h,s;U))\,.
\end{equation*}

Thus, for the latter summand in the right hand side of \eqref{e:left-continuity} we have 
\begin{equation} \label{e:S_2}
S_2^2:=\int_{s-h}^s \|\Phi(\sigma,s-h)x\|_Y^2\,d\sigma
\lesssim_T \|x\|^2 \int_{s-h}^s 1\,d\sigma=c_T^2  \|x\|^2 h
\end{equation}
for some $c_T>0$.
Therefore, the estimate \eqref{e:S_2} combined with \eqref{e:left-continuity} allows to establish 
\begin{equation*}
\|\Phi(s,s-h)x-x\| \longrightarrow 0\,, \quad \textrm{as $h\to 0^+$}\,,
\end{equation*}
that is nothing but the left-continuity of $s \longrightarrow \Phi(t,s)$.
The proof is complete.
\end{proof}


\subsubsection{The optimal cost operator, feedback formula}
On the basis of the preliminary analysis performed in the previous sections, we are now able to
bring the former representation formula \eqref{e:pre-feedback} for the optimal control to the
definitive {\em closed-loop} form.

We compute the optimal cost by evaluating the functional \eqref{e:family-cost} on the optimal control via the obtained representation formula \eqref{e:pre-feedback} (in terms of the optimal evolution).
For any given $x\in Y$, we have 
\begin{equation*}
\begin{split}
J_t(\hat{g})&=\alpha \int_t^T \|\hat{g}(s,t,x)\|^2_U\,ds+\|\hat{y}(T,t,x)\|_Y^2  
= \frac{1}{\alpha}\int_{t}^T \|L_{tT}^*\Phi(T,t)x\|^2_U\,dt +\|\Phi(T,t)x\|_Y^2
\\
&= \frac{1}{\alpha}(\Phi(T,t)x,L_{tT}L^*_{tT}\Phi(T,t)x)_Y +\|\Phi(T,t)x\|_Y^2
\\
&=(\Phi(T,t)x,- L_{tT}\hat{g}(\cdot,t,x)+\Phi(T,t)x)_Y
\\
&=(\Phi(T,t)x,e^{A(T-t)}x)_Y=:(P(t)x,x)_Y\,,
\end{split}
\end{equation*}  
where we introduced the {\em optimal cost operator}
\begin{equation} \label{e:riccati-operator}
P(t)x = e^{A^*(T-t)}\Phi(T,t)x\,,
\end{equation} 
which is more precisely a family of continuous linear operators on $Y$ for $t\in [0,T]$. 
Then we also have, for $x,y\in Y$,
\[
\begin{split}
(P(t)x,y)_Y &= (\Phi(T,t)x,e^{A(T-t)}y)_Y=
(\Phi(T,t)x,\Phi(T,t)y - L_{tT}\hat{g}(\cdot,t,y))_Y 
\\& 
= (\Phi(T,t)x,\Phi(T,t)y)_Y + \frac{1}{\alpha} (\Phi(T,t)x, L_{tT}L^\ast_{tT}\Phi(T,t)y )_Y
\\&
= (\Phi(T,t)x,\Phi(T,t)y)_Y +\frac{1}{\alpha} \int_t^T (L_{tT}^*\Phi(T,t)x,L_{tT}^*\Phi(T,t)y)_U\,dt =(x,P(t)y)_Y\,.
\end{split}
\]
which in turn indicates that $P(t)$ is self-adjoint and non-negative.
 
At this point the evolution property \eqref{e:evolution} satisfied by the optimal state allows to disclose the presence of the optimal cost operator in the representation formula \eqref{e:pre-feedback}, so as to attain the sought feedback representation of the optimal control.
Indeed,
\begin{equation}\label{e:feedback-formula}
\begin{split}
\hat{g}(t,s,x) &= -\frac{1}{\alpha}B^* e^{(T-t)A^*}\Phi(T,s)x
-\frac{1}{\alpha}B^* e^{(T-t)A^*}\Phi(T,t)\Phi(t,s)x 
\\
& = -\frac{1}{\alpha} B^* P(t)\Phi(t,s)x =
-\frac{1}{\alpha} B^* P(t)\hat{y}(t,s,x)\,,   \quad \textrm{for $0\le s\le t\le T$,}
\end{split}
\end{equation}
and infer that the operator $B^\ast P(t)$ is continuous from $Y$ into $L^2(s,T;U)$, 
$0\le s\le t\le T$.


\medskip

{\bf Conclusions.}
Below we summarize the conclusions of the analysis carried out so far:  
\begin{enumerate}

\item[i)]
the existence of a unique optimal pair $(\hat{g}(\cdot,s,y_0),\hat{y}(\cdot,s,y_0))$ for the 
family of optimal control problems (depending on the parameter $s\in [0,T)$) follows
by pretty elementary computations, as the functional is coercive in the space of admissible controls
$\mathcal{U}=L^2(0,T;U)$;

\item[ii)]
a (pointwise in time) representation of the optimal control $\hat{g}(\cdot,s,y_0)$ in {\em closed loop form} -- namely, depending on the optimal state $\hat{y}(\cdot,s,y_0)=\Phi(t,s)y_0$ -- holds true; see \eqref{e:feedback-formula};
the gain operator $B^*P(t)$ that occurs in the said formula is well-defined on the optimal evolution;

\item[iii)]
however, in the presence of an the operator $G$ which does not possess smoothing properties (such as the identity $I$ on $Y$), as it is well-known, there is no clue that the optimal cost operator $P(t)$ solves the corresponding Riccati equation on $[0,T)$.
The possibility of giving a full meaning to the gain operator $B^*P(t)$ when acting on elements in the state space $Y$, or in a dense set of it -- and it would suffice in $\mathcal{D}(A)$ -- is an unanswered question at this point in time.
\end{enumerate}

\smallskip
This weak point of the Riccati theory pertaining to the optimal boundary control of the Maxwell system -- just like the case of boundary control systems governed by other partial differential equations of hyperbolic type -- is compensated, although only partially, proving that $P(t)$ is uniquely determined as the limit of a sequence $\{P_n(t)\}_n$ of bounded operators which do solve a differential Riccati equation, with bounded gain $B^*P_n(t)$ (for each $n\in \mathbb{Z}^+$).
This approximation procedure may lead to introduce a concept of generalized solution to Riccati equations with unbounded gains, as suggested in \cite[\S~9.5.2]{las-trig-redbooks}.
We provide the details of this approximation in the next section, for the sake of completenesss and clarity.


\section{Approximating $P(t)$ via solutions to proper Riccati equations} \label{s:approximating}
Given the optimal control problem \eqref{e:cauchy-family}-\eqref{e:family-cost}, let $P(t)$,
$t\in [0,T]$, be the relative optimal cost (or, Riccati) operator: to wit, the operator that defines the quadratic form $(P(t)y_0,y_0)_Y$ arising from the computation of the optimal value of the functional $J_t(g)$; $P(t)$ is represented in terms of the optimal evolution via \eqref{e:riccati-operator}. 
More generally, if the penalization of the state at the final time $T$ in the cost functional \eqref{e:cost} is $\|Gy(T)\|_Y^2$ 
in place of $\|y(T)\|_Y^2$, with $G\in \cL(Y)$, then the formula for the optimal cost operator is
\begin{equation} \label{e:riccati-op-general}
P(t)x = e^{A^*(T-t)}G^*G\Phi(T,t)x\,;
\end{equation} 
\eqref{e:riccati-operator} is a special case of \eqref{e:riccati-op-general}, consistent with our original choice $G=I$.

We introduce then the sequence $\{G_n\}_n=G nR(n,A)$, $n\in \mathbb{Z}^+$, where $R(\lambda,A)$ indicates the resolvent operator $(\lambda I-A)^{-1}$ meaningful for $\lambda \in \rho(A)$, and -- as it is well known -- the
bounded operators $nR(n,A)$ are {\em approximants of the identity} $I$ (the standard notation $J_n$ for these approximants is not adopted in this work, as the letter $J$ pertains to the integral functionals). 

For this, we recall that $n\in \rho(A)$ is valid here for all $n\in \mathbb{Z}^+$: indeed,
this is seen e.g. going through the proof of maximal dissipativity of the operator $A$ given in \cite[Proposition\,1]{el19} and addressing the static PDE problem corresponding to 
\[
(\lambda I-A)\begin{bmatrix}u\\v\end{bmatrix}=\begin{bmatrix}f\\g\end{bmatrix}\in Y
\] 
for any real $\lambda \ge 1$, rather than just when $\lambda =1$.

With $J_n(g)$ the functional which displays $G_n$ in place of $G$, that is 
\begin{equation}\label{e:cost_n}
J_n(g)=J_{T,n}(g) = \alpha \int_0^T  \|g(t)\|^2_U\,dt + \|G_ny(T)\|^2_Y\,,
\end{equation}
we consider the following family of optimal control problems.

\smallskip
\paragraph{\bf Problem $\mathbf{\mathcal{P}}_n$}
{\em 
Given $y_0\in Y$, seek a control function $\hat{g}_n(\cdot)$ 
that minimizes the functional $J_{T,n}(g)$ overall $g\in L^2(0,T;U)$, 
where $y(\cdot)=y(\cdot;y_0,g)$ is the solution to \eqref{e:cauchy-family}
corresponding to the control function $g(\cdot)$ and with initial datum $y_0$ given
by \eqref{e:mild-family}.
}

\smallskip
Owing to the enhanced regularity of the operator $G_n$, the optimal control problem $\mathcal{P}_n$
possesses a full, Riccati-based, solution; see \cite[vol.\,II, Theorems~9.2.1-9.2.2]{las-trig-redbooks}.
In particular, there is a family of linear and bounded, non-negative, self-adjoint operators 
$\{P_n(t)\}_n$, $n\in \mathbb{Z}^+$, whose terms solve uniquely the differential Riccati equation
\begin{equation} \label{e:DRE_n}
\frac{d}{dt}\langle P_n(t)x,y\rangle_Y + \langle P_n(t)x,Ay\rangle_Y + \langle Ax,P_n(t)y\rangle_Y   
- \langle B^*P_n(t)x,B^*P_n(t)y\rangle_U=0 \quad \text{in $[0,T)$}
\end{equation}
for $x,y \in \cD(A)$, supplemented with the final condition $P_n(T)=G_n$, with gain operator 
$B^*P_n(t)$ that belongs to $\cL(Y,U)$ (for all $t\in [0,T]$). 

\begin{remark}
\begin{rm}
Given $s\in [0,T)$, the notation $J_{s,n}=J_{T,s,n}(g)$ for the parametrized quadratic functional 
is self-explanatory.
\end{rm}
\end{remark}

The outcome of the limit process, as $n\to +\infty$, in relation to the original optimal control problem is detailed in the following result, which is akin to Theorem~9.5.2.2 in 
\cite[vol.\,II]{las-trig-redbooks}.

\begin{proposition} \label{e:extended-sln}
Given the original and the approximating optimal control problems $\mathcal{P}$ and $\mathcal{P}_n$, let $J_s(g)$, $\hat{g}(\cdot,s,y_0)$, $\hat{y}(\cdot,s,y_0)$, $P(s)$ 
and $J_n(g)$, $\hat{g}_n(\cdot,s,y_0)$, $\hat{y}_n(\cdot,s,y_0)$, $P_n(s)$ be the functional to be minimized, the optimal control and state, the optimal cost operator pertaining to either problem, respectively. 

Then, the following convergence results hold true, as $n\to +\infty$:
\begin{subequations} \label{e:convergence}
\begin{align}
&\text{$\|\hat{g}_n(\cdot,s,y_0)-\hat{g}(\cdot,s,y_0)\|\longrightarrow 0$ 
\; in $L^2(s,T;U)$, uniformly in $s$;}
\label{e:convergence_1}
\\[2mm]
&\text{$\|\hat{y}_n(\cdot,s,y_0)-\hat{y}(\cdot,s,y_0)\|\longrightarrow 0$ 
\; in $C([s,T],U)$, uniformly in $s$;}
\label{e:convergence_2}
\\[2mm]
& \text{$J_{s,n}(\hat{g}_n)-J_{s}(\hat{g})\longrightarrow 0$ 
\; in $\mathbb{R}$, uniformly in $s$};
\label{e:convergence_3}
\\[2mm]
&\text{$\|P_n(\cdot)z-P(\cdot)z\|\longrightarrow 0$ 
\; in $C([0,T],Y)$, for all $z\in Y$.}
\label{e:convergence_4}
\end{align}
\end{subequations}

\end{proposition}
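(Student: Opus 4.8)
The plan is to prove the four convergences in \eqref{e:convergence} essentially in the order in which they are stated, deriving each from explicit representation formulas and the uniform bounds already established, and deferring the delicate uniformity-in-$s$ point to the end. First I would record the analogues, for the approximating problem $\mathcal{P}_n$, of the formulas \eqref{e:opt-control}, \eqref{e:opt-state}, \eqref{16}: namely $\hat g_n(\cdot,s,x)=-\Lambda_{sT,n}^{-1}L_{sT}^* G_n^* G_n e^{(T-s)A}x$ with $\Lambda_{sT,n}=\alpha I+L_{sT}^*G_n^*G_n L_{sT}$, and $\hat y_n(T,s,x)=(I-G_n^*G_n L_{sT}\Lambda_{sT,n}^{-1}L_{sT}^*G_n^*G_n)e^{(T-s)A}x$ (these follow from the same completion-of-the-square argument, since $G_n\in\mathcal L(Y)$ simply replaces the observation $y(T)\mapsto G_n y(T)$ at the final time). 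The key structural fact to invoke is that $G_n=G\,nR(n,A)$ satisfies $G_n z\to Gz=z$ strongly for every $z\in Y$ as $n\to\infty$, by the standard approximation-of-the-identity property of $nR(n,A)$ stated just before the Proposition, together with $\sup_n\|G_n\|_{\mathcal L(Y)}<\infty$ (contraction semigroup, so $\|nR(n,A)\|\le 1$).

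Next I would prove \eqref{e:convergence_1}. The difference $\hat g_n-\hat g$ is handled by writing $\Lambda_{sT,n}^{-1}-\Lambda_{sT}^{-1}=\Lambda_{sT,n}^{-1}(\Lambda_{sT}-\Lambda_{sT,n})\Lambda_{sT}^{-1}$ and $L_{sT}^*G_n^*G_n-L_{sT}^*=L_{sT}^*(G_n^*G_n-I)$, so that everything reduces to estimating $\|(G_n^*G_n-I)w\|_Y$ for the relevant vectors $w$ and $\|(\Lambda_{sT}-\Lambda_{sT,n})v\|_U$. The first is controlled by strong convergence $G_n\to I$; the second, since $\Lambda_{sT}-\Lambda_{sT,n}=L_{sT}^*(I-G_n^*G_n)L_{sT}$, reduces again to strong convergence applied to $L_{sT}v\in Y$. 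One then uses $\|\Lambda_{sT,n}^{-1}\|,\|\Lambda_{sT}^{-1}\|\le\alpha^{-1}$ and the uniform (in $s$) bounds $\|L_{sT}\|,\|L_{sT}^*\|,\|e^{(T-s)A}\|$ from Section~\ref{s:OCP} and the admissibility estimate \eqref{e:admissibility}. Once \eqref{e:convergence_1} is in hand, \eqref{e:convergence_2} follows from $\hat y_n(t,s,x)-\hat y(t,s,x)=e^{(t-s)A}(x-x)+(L_s[\hat g_n-\hat g](\cdot,s,x))(t)$, i.e.\ $=L_s(\hat g_n-\hat g)$, together with $L_s\in\mathcal L(L^2(s,T;U),C([s,T],Y))$ with norm bounded uniformly in $s$ (Remark~\ref{r:trace-regularity}, rescaled). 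Then \eqref{e:convergence_3} is immediate: $J_{s,n}(\hat g_n)-J_s(\hat g)=(P_n(s)y_0,y_0)_Y-(P(s)y_0,y_0)_Y$, and more directly one expands both optimal costs via the quadratic forms and uses \eqref{e:convergence_1}--\eqref{e:convergence_2}.

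For \eqref{e:convergence_4} I would start from the representation $P_n(t)x=e^{A^*(T-t)}G_n^*G_n\Phi_n(T,t)x$ (the $n$-analogue of \eqref{e:riccati-op-general}), where $\Phi_n(T,t)x=\hat y_n(T,t,x)$, and compare it with $P(t)x=e^{A^*(T-t)}\Phi(T,t)x$. Splitting $G_n^*G_n\Phi_n(T,t)x-\Phi(T,t)x=(G_n^*G_n-I)\Phi_n(T,t)x+(\Phi_n(T,t)x-\Phi(T,t)x)$, the second summand tends to $0$ in $Y$ (uniformly in $t$) by \eqref{e:convergence_2}, and the first tends to $0$ for fixed $x$ by strong convergence $G_n^*G_n\to I$ applied to the bounded-in-$Y$ family $\{\Phi_n(T,t)x\}$; since $\|e^{A^*(T-t)}\|\le 1$, we obtain $\|P_n(t)z-P(t)z\|_Y\to0$ for each fixed $z$, and the convergence is in $C([0,T],Y)$ because $t\mapsto P_n(t)z$ and $t\mapsto P(t)z$ are continuous and the estimates above are uniform in $t$.

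\textbf{Main obstacle.} The one genuinely delicate point is the \emph{uniformity in $s$} of the convergences \eqref{e:convergence_1}--\eqref{e:convergence_3}. Strong convergence $G_n\to I$ is pointwise, not uniform on bounded sets, so a naive argument only yields convergence for each fixed $s$. The resolution is to note that in every estimate the operator $G_n^*G_n-I$ is applied to vectors lying in a \emph{fixed compact} (or at least precompact, or uniformly bounded and ``$s$-uniformly controlled'') subset of $Y$ — e.g.\ to $\{L_{sT}v\}$ with $v$ ranging over a ball, or to $\{e^{(T-s)A}x\}$ as $s$ varies over $[0,t]$, which is the continuous image $\{e^{rA}x:r\in[0,T]\}$ of a compact interval and hence compact — on which strong convergence \emph{is} uniform. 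I would therefore organize the proof so that every occurrence of $(G_n^*G_n-I)$ is isolated acting on such a set; combined with the already-noted uniform-in-$s$ operator bounds, this upgrades pointwise-in-$s$ convergence to uniform-in-$s$ convergence. This compactness/equicontinuity bookkeeping is the real content of the proof; the remaining manipulations are routine.
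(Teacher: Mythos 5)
Your proposal is correct and follows essentially the same route as the paper's proof: the resolvent-identity decomposition of $\Lambda_{sT,n}^{-1}-\Lambda_{sT}^{-1}$ for the controls, the identity $\hat{y}_n-\hat{y}=L_s(\hat{g}_n-\hat{g})$ together with $L_s\in\cL(L^2(s,T;U),C([s,T],Y))$ for the states, and the splitting $(G_n^*G_n-G^*G)\hat{y}_n(T,t,y_0)+G^*G\big(\hat{y}_n(T,t,y_0)-\hat{y}(T,t,y_0)\big)$ for the Riccati operators. Your concluding discussion of uniformity in $s$ (upgrading the strong convergence of $G_n^*G_n$ to uniform convergence on the compact orbit $\{e^{rA}y_0\colon r\in[0,T]\}$) is in fact more careful than the paper, which asserts the norm convergence \eqref{e:diff-inverse} directly from strong convergence; the only slip is a spurious factor $G_n^*G_n$ in your formula for $\hat{y}_n(T,s,x)$, which you never actually use.
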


\begin{proof}
{\bf a.}
Let us begin by recalling the expression of the optimal solution for problems $\mathcal{P}$ and 
$\mathcal{P}_n$, respectively, in terms of the initial state $y_0$.
We have
\begin{equation*}
\hat{g}(\cdot,s,y_0)= -\big[\Lambda_{sT}^{-1}L_{sT}^* G^*G e^{(T-s)A}y_0\big](\cdot)\,,
\quad 
\hat{g}_n(\cdot,s,y_0)= -\big[\Lambda_{sT,n}^{-1}L_{sT}^* G_n^*G_n e^{(T-s)A}y_0\big](\cdot)\,,
\end{equation*}
where
\begin{equation*}
\Lambda_{sT} =I+L_{sT}^*G^*GL_{sT}\,, \qquad
\Lambda_{sT,n} =I+L_{sT}^*G_n^*G_nL_{sT}\,,
\end{equation*}
so that 
\begin{equation*}
\Lambda_{sT,n}^{-1}- \Lambda_{sT}^{-1} 
= \Lambda_{sT,n}^{-1}(\Lambda_{sT}-\Lambda_{sT,n})\Lambda_{sT}^{-1}
= \Lambda_{sT,n}^{-1}L_{sT}^*(G^*G- G_n^*G_n)L_{sT}\Lambda_{sT}^{-1}\,.
\end{equation*}
We see that
\begin{equation} \label{e:diff-inverse}
\|\Lambda_{sT,n}^{-1}- \Lambda_{sT}^{-1}\|_{\mathcal{L}(Y)} \longrightarrow 0\,, 
\quad \text{as $n\to+\infty$,}  
\end{equation}
as a consequence of the strong convergence of $G_n^*G_n$ to $G^*G$ and since 
$\|\Lambda_{sT,n}^{-1}\|_{\mathcal{L}(Y)}\le 1/\alpha$ for all $n\in \mathbb{Z}^+$.

We compute the difference $\hat{g}_n(\cdot,s,y_0)-\hat{g}(\cdot,s,y_0)$, that we decompose
as follows:
\begin{equation*}
\begin{split}
&\hat{g}_n(\cdot,s,y_0)-\hat{g}(\cdot,s,y_0) \\[1mm]
&=-\big[\Lambda_{sT,n}^{-1}L_{sT}^* G_n^*G_n e^{(T-s)A}y_0\big](\cdot) +\big[\Lambda_{sT}^{-1}L_{sT}^* G^*G e^{(T-s)A}y_0\big](\cdot)
\\[1mm]
& =-\Big[\big(\Lambda_{sT,n}^{-1}- \Lambda_{sT}^{-1} \big) L_{sT}^*G_n^*G_n
+ \Lambda_{sT}^{-1} L_{sT}^*(G_n^*G_n-G^*G)\Big]\,e^{(T-s)A}y_0\,.
\end{split}
\end{equation*}
Thus, the convergence in \eqref{e:convergence_1} follows taking into account 
\eqref{e:diff-inverse} and the regularity $L_{sT}^*\in \mathcal{L}(Y,L^2(s,T;U))$,
besides -- once again -- the strong convergence of $G_n^*G_n$ to $G^*G$ and the uniform bound
$\|\Lambda_{sT,n}^{-1}\|_{\mathcal{L}(Y)}\le 1/\alpha$.

\smallskip
\noindent
{\bf b.}
The convergence of the sequence of optimal states follows from the one of the corresponding
optimal controls readily.
Indeed, it suffices to write down (for $s,t$ such that $0\le s\le t\le T$)
\begin{equation*}
\begin{split}
\hat{y}_n(t,s,y_0)-\hat{y}(t,s,y_0)
&=e^{(t-s)A}y_0+ [L_s\hat{g}_n(\cdot,s,y_0)](t) - \big(e^{(t-s)A}y_0+ [L_s\hat{g}(\cdot,s,y_0)](t)\big)
\\[1mm]
& =\big[L_s\big(\hat{g}_n(\cdot,s,y_0) -\hat{g}(\cdot,s,y_0)\big)\big](t)
\end{split}
\end{equation*}
and recall that $L_s\in \mathcal{L}(L^2(s,T;U),C([s,T],Y))$ (with convergence which is uniform with respect to $s$), thus confirming the validity of \eqref{e:convergence_2}.

\smallskip
\noindent
{\bf c.}
Given \eqref{e:convergence_1} and \eqref{e:convergence_2}, the limit \eqref{e:convergence_3} -- that is the convergence of the sequence of optimal values $J_{s,n}(\hat{g}_n)$ to $J_s(\hat{g})$, as $n\to +\infty$ -- is straightforward, as
\begin{equation*}
\min_{g\in L^2(s,T;U)} J_{s,n}(g)=J_{s,n}(\hat{g}_n)
= \alpha \int_s^T  \|\hat{g}_n(t)\|^2_U\,dt + \|G_n\hat{y}_n(T)\|^2_Y\,.
\end{equation*}
The convergence of $J_n(\hat{g})$ to $J(\hat{g})$
holds as a special case ($s=0$).

\smallskip
\noindent
{\bf d.}
Finally, resuming the definition of any optimal cost operator in terms of the respective optimal evolution, we see that
\begin{equation*}
\begin{split}
P_n(t)y_0-P(t)y_0
& = e^{(T-t)A^*}G_n^*G_n\hat{y}_n(T,t,y_0) - e^{(T-t)A^*}G^*G\hat{y}(T,t,y_0)
\\[1mm]
& = e^{(T-t)A^*}\big[G_n^*G_n-G^*G\big]\hat{y}_n(T,t,y_0) 
\\[1mm]
& \qquad + e^{(T-t)A^*} G^*G \big[\hat{y}_n(T,t,y_0) -\hat{y}(T,t,y_0)\big]\,;
\end{split}
\end{equation*}
then, \eqref{e:convergence_4} follows considering 
\begin{itemize}
\item
(for the first summand) the bound for $\|\hat{y}_n(T,t,y_0)\|=\|\Phi_n(T,t)y_0\|\le C_T \|y_0\|$, uniform with respect to $n$, along with the strong convergence of $G_n^*G_n$ to $G^*G$, combined with
\item
(for the second summand) the convergence \eqref{e:convergence_2}.
\end{itemize}
\end{proof}


\section{The optimal solution in the case of zero conductivity} \label{s:zero-conductivity}
In this section we examine the special case where the conductivity $\sigma$ in the Maxwell system \eqref{e:maxwell} is zero.
It is readily seen from \eqref{e:A} and  \eqref{e:A-star} that in this case the dynamics operator
$A$ is skew-adjoint, i.e. $A^*=-A$, so that $A$ is the infinitesimal generator of a $C_0$-group 
$\{e^{tA}\}_{t\in\mathbb{R}}$ by Stone's Theorem; see e.g. \cite{engel-nagel}.
Then, as we will see below, the work \cite{flandoli_1987} allows us not only to find $P(t)$ by way of solving the dual RE in \eqref{e:Cauchy-for-DRE}, thus obtaining the synthesis of the optimal control via the {\em closed-loop} equation, but also to infer an {\em open-loop} representation of the optimal solution (which makes proving the well-posedness of the Riccati equation \eqref{e:Cauchy-for-DRE} dispensable).

A property that is ascertained and that plays a key role in the outcome for the optimal solution 
of problem \eqref{e:full-cauchy}-\eqref{e:abstract-general-cost} is the fact that $P(t)$ is an isomorphism for any $t\in [0,T]$.
This property is deduced in \cite{flandoli_1987} by establishing
\begin{itemize}
\item
the well-posedness of a certain Cauchy problem associated with the dual Riccati equation, that
is \eqref{e:Cauchy-for-dualRE}; 
\item
that the unique solution $Q(t)$ of the aforesaid Cauchy problem is an isomorphism for all 
$t\in [0,T]$;
\item
that the inverse operator $Q(t)^{-1}$, $t\in [0,T]$, coincides with the (Riccati) optimal cost operator $P(t)$ for the original optimal control problem.
\end{itemize}
Actually, that $P(t)$ is an isomorphism for all $t\in [0,T]$ can be inferred {\em a priori}.
We include this result at the outset, pointing out that although the proof is similar to the proof of Theorem 9.4.1 in \cite{las-trig-redbooks}, 
our setting is different since we work with $G=I$ and $C=0$.
A certain sought estimate from below can be attained just using that $G$ is invertible. 


\begin{proposition}[$P(t)$ is an isomorphism] \label{p:isomorphism}
Let $P(t)$ be the optimal cost (or, Riccati) operator of the optimal control problem over the interval
$[t,T]$, namely, the operator that defines the quadratic form $(P(t)y_0,y_0)_Y$ arising from the computation of the optimal value of the functional $J_t(\hat{g})$, see formula \eqref{e:optimal-cost_1}.
%
Then, $P(t)$ is an isomorphism for $t\in [0,T]$.
\end{proposition}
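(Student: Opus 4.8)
The plan is to show directly that $P(t)$ is bounded below, i.e. that there exists $c_t>0$ such that $(P(t)y_0,y_0)_Y\ge c_t\|y_0\|_Y^2$ for all $y_0\in Y$; together with the self-adjointness and non-negativity already established in S3, this yields that $P(t)$ is an isomorphism. Recall from the computation preceding \eqref{e:riccati-operator} (in the general form with weighting operator $G$) that
\begin{equation*}
(P(t)y_0,y_0)_Y = J_t(\hat g(\cdot,t,y_0)) = \alpha\int_t^T\|\hat g(s,t,y_0)\|_U^2\,ds + \|G\hat y(T,t,y_0)\|_Y^2\,,
\end{equation*}
so it suffices to bound the right-hand side from below by $c_t\|y_0\|_Y^2$. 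Since $G$ is invertible (here $G=I$), we have $\|G\hat y(T,t,y_0)\|_Y\ge \|G^{-1}\|^{-1}\|\hat y(T,t,y_0)\|_Y$, so it is enough to bound $\alpha\int_t^T\|\hat g\|_U^2\,ds + \|\hat y(T,t,y_0)\|_Y^2$ below by a multiple of $\|y_0\|_Y^2$.

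The key step is to recover $\|y_0\|_Y$ from the optimal data. First I would use the variation-of-parameters formula \eqref{e:mild-family} evaluated at $t=T$, which with the group/semigroup property gives
\begin{equation*}
e^{(T-t)A}y_0 = \hat y(T,t,y_0) - (L_t\hat g(\cdot,t,y_0))(T) = \hat y(T,t,y_0) - L_{tT}\hat g(\cdot,t,y_0)\,.
\end{equation*}
Since $e^{tA}$ is a $C_0$-semigroup of contractions, $e^{(T-t)A}$ has a bounded inverse on $Y$ (indeed $\|e^{-(T-t)A}\|\le M_t$ for some $M_t$, as $A$ generates a group when $\sigma=0$, or more generally because the relevant bounded-inverse bound is available on $[0,T]$), so
\begin{equation*}
\|y_0\|_Y \le M_t\big(\|\hat y(T,t,y_0)\|_Y + \|L_{tT}\|\,\|\hat g(\cdot,t,y_0)\|_{L^2(t,T;U)}\big)\,.
\end{equation*}
Squaring and using the elementary inequality $(a+b)^2\le 2a^2+2b^2$ then gives $\|y_0\|_Y^2 \le C_t\big(\|\hat y(T,t,y_0)\|_Y^2 + \|\hat g(\cdot,t,y_0)\|_{L^2(t,T;U)}^2\big)$, which is exactly the required lower bound for $(P(t)y_0,y_0)_Y$ up to the constant $\min(1,\alpha)/C_t$. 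One then invokes $\|G^{-1}\|$ to absorb $G$ in the general case.

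The main obstacle is justifying the bounded invertibility of $e^{(T-t)A}$ on $Y$ over the whole interval $[0,T]$. In the zero-conductivity case this is immediate since $A$ generates a $C_0$-group by Stone's theorem, so $e^{(T-t)A}$ is invertible with inverse $e^{-(T-t)A}$, uniformly bounded for $T-t\in[0,T]$. For the general case $\sigma\neq 0$ one must argue that the dissipative perturbation still leaves $e^{(T-t)A}$ boundedly invertible on the compact time interval — this follows, for instance, by writing $A = A_0 - \ve^{-1}\sigma(\cdot)$ with $A_0$ skew-adjoint and treating the bounded perturbation via a Duhamel/Gronwall estimate to obtain $\|e^{-(T-t)A}\|\le e^{c(T-t)}$, or by noting that the backward problem $z'=-Az$ is itself well-posed on $[0,T]$. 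I would present the argument first in the clean skew-adjoint setting and then remark that the bounded-perturbation estimate extends it, which is all that Proposition~\ref{p:isomorphism} requires.
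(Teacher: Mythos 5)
Your proof is correct, but it takes a genuinely different route from the paper's. The paper discards the control term entirely, bounding $(P(t)y_0,y_0)_Y\ge\|\hat y(T,t,y_0)\|_Y^2$, and then invokes the optimality identity $\bigl[I+\tfrac{1}{\alpha}L_{tT}L_{tT}^*\bigr]\hat y(T,t,y_0)=e^{(T-t)A}y_0$ (a rewriting of \eqref{e:to-be-used-in-5}) together with the bounded invertibility of $I+\tfrac{1}{\alpha}L_{tT}L_{tT}^*$ to obtain $\|\hat y(T,t,y_0)\|_Y\gtrsim_T\|e^{(T-t)A}y_0\|_Y=\|y_0\|_Y$, the last equality coming from $A^*=-A$. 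You instead keep both terms of the functional and use only the variation-of-parameters identity $e^{(T-t)A}y_0=\hat y(T,t,y_0)-L_{tT}\hat g(\cdot,t,y_0)$ plus the triangle inequality; this bypasses the optimality identity altogether (indeed your estimate shows $J_t(g)\gtrsim_T\|y_0\|_Y^2$ for \emph{every} admissible $g$, so coercivity of the value function is immediate), at the price of invoking the boundedness of $L_{tT}$, which the paper's route does not need at this stage. Both arguments hinge on the same essential fact, $\|e^{(T-t)A}y_0\|_Y\gtrsim\|y_0\|_Y$; note only that your opening justification (``a contraction semigroup has a bounded inverse'') is not a valid implication as stated --- it is the group property, which you correctly supply in the parenthetical, that does the work. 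Finally, your observation that the bounded perturbation $-\ve^{-1}\sigma$ of the skew-adjoint part preserves the group property, so that the lower bound (and hence the isomorphism property of $P(t)$) persists for $\sigma\neq0$, is a genuine and correct strengthening: the paper establishes Proposition~\ref{p:isomorphism} only in the setting of Section~\ref{s:zero-conductivity}, where $A^*=-A$.
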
 

\begin{proof}
We know that the optimal cost of the optimal control problem with $t$ as initial time and $y_0$ as initial state is
\begin{equation} \label{e:1}
\langle P(t)y_0,y_0\rangle =J_t(\hat{g}) 
=\alpha \|\hat{g}(\cdot,t,y_0)\|_{L^2(t,T;U)}^2+\|\hat{y}(T,t,y_0)\|_Y^2
\ge \|\hat{y}(T,t,y_0)\|_Y^2\,.
\end{equation}
(We are in the case in which $C=0$ and $G=I$: however, the proof would work -- {\em mutatis mutandis} -- with nontrivial $C$ as well, provided $G$ is an isomorphism.)
Recall now the relation \eqref{e:pre-feedback} linking the optimal control to the optimal state,
which yields \eqref{e:to-be-used-in-5}, that is equivalent to  
\begin{equation*}
\Big[I+\frac{1}{\alpha}L_{tT}L_{tT}^*\Big] \hat{y}(T,t,y_0)= e^{(T-t)A}y_0\,.
\end{equation*}
Thus, inserting 
\begin{equation*}
\hat{y}(T,t,y_0)= \Big[I+\frac{1}{\alpha}L_{tT}L_{tT}^*\Big]^{-1} e^{(T-t)A}y_0
\end{equation*}
in \eqref{e:1} we see that 
\begin{equation*}
\langle P(t)y_0,y_0\rangle \ge \Big\|\Big[I+\frac{1}{\alpha}L_{tT}L_{tT}^*\Big]^{-1} 
e^{(T-t)A}y_0\Big\|_Y^2\,.
\end{equation*}
The above implies
\begin{equation} \label{e:2}
\langle P(t)y_0,y_0\rangle \gtrsim_T \,\|e^{(T-t)A}y_0\|_Y^2 \ge \|y_0\|_Y^2
\qquad \forall t\in [0,T]\,,
\end{equation}
where the last estimate is a consequence of $A^*=-A$, along with
\begin{equation*}
e^{(T-t)A^*}e^{(T-t)A}= e^{-(T-t)A}e^{(T-t)A}=I\,.
\end{equation*}
The conclusion follows combining \eqref{e:2} with
\begin{equation*}
\langle P(t)y_0,y_0\rangle \lesssim_T \|y_0\|_Y^2 \qquad \forall t\in [0,T]\,.
\end{equation*} 

\end{proof}
The following result adds a piece to the picture of solvability of the optimal control problem
\eqref{e:maxwell}-\eqref{e:IC}-\eqref{e:BC}-\eqref{e:cost} for the Maxwell's system. 


\begin{proposition} \label{p:from-Flandoli}
Consider the Maxwell's system \eqref{e:maxwell} with $\sigma=0$, the control system \eqref{e:full-cauchy} which is the abstract reformulation of the IBVP \eqref{e:maxwell}-\eqref{e:IC}-\eqref{e:BC} in $Y$, along with the quadratic funtional \eqref{e:abstract-general-cost}.
Assume $C, G\in \cL(Y)$ and that in addition $G$ is an isomorphism.

Then, the following assertions are valid for the optimal control problem \eqref{e:full-cauchy}-\eqref{e:abstract-general-cost}.

\smallskip
a) The Riccati operator $P(t)$ that occurs in the feedback formula \eqref{e:feedback} coincides
with $Q(t)^{-1}$, $Q(t)$ being the unique solution to the Cauchy problem associated with the dual Riccati equation \eqref{e:Cauchy-for-dualRE}.

\smallskip
b) The optimal pair $(\hat{g},\hat{y})$ admits the following (more explicit, alternative) representation:
\begin{equation}
\hat{g}(t)\equiv \hat{g}(t,0,y_0)=-B^*z(t)\,, \quad \hat{y}(t)=Q(t)z(t)\,,
\end{equation}
where $Q(t)$ is as above, while $z(\cdot)$ solves the non-autonomus system 
\begin{equation}
\begin{cases}
z'=[A-C^*CQ(t)]z
\\
z(0)=Q(0)^{-1}y_0\,.
\end{cases}
\end{equation}
\smallskip
c) Therefore, in particular, if $C=0$ and $G=I$ -- that is precisely the case of the original functional (with $\alpha =1$) -- we simply have
\begin{equation}
\hat{g}(t)=-B^*e^{tA}Q(0)^{-1}y_0\,, \quad \hat{y}(t)=Q(t)e^{tA}Q(0)^{-1}y_0\,,
\end{equation}
where
\begin{equation} \label{e:explicit-Q}
Q(t)x=\int_t^T e^{(r-t)A^*}BB^*e^{(r-t)A}x\,dr+e^{(T-t)A^*}e^{(T-t)A}x\,, 
\quad x\in Y\,, \; t\in [0,T]\,.
\end{equation}

\end{proposition}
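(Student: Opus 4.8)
The plan is to invoke Flandoli's duality theory for the LQ problem with bounded observation operator, applied to the control system $y'=Ay+Bg$ in $Y$ with cost \eqref{e:abstract-general-cost}. The key structural fact, available here precisely because $\sigma=0$, is that $A$ generates a $C_0$-group with $A^*=-A$; consequently the pair $(A^*,B)$ governs the adjoint/dual dynamics, and the admissibility inequality \eqref{e:admissibility} -- which for a group is symmetric in $A$ and $A^*$ -- guarantees that the dual problem is itself a well-posed LQ problem of the same type. First I would set up the dual Riccati equation \eqref{e:Cauchy-for-dualRE} and observe that, since $C\in\cL(Y)$, the quadratic term $CQ(t)x\mapsto CQ(t)y$ is a genuinely bounded perturbation; thus by \cite[Theorem~2.1.1]{flandoli_1987} (or \cite[Theorem~9.6.2.4]{las-trig-redbooks}) the Cauchy problem \eqref{e:Cauchy-for-dualRE} has a unique solution $Q(\cdot)\in C([0,T],\cL(Y))$, non-negative and self-adjoint, with terminal value $(G^*G)^{-1}$, which is legitimate because $G$ is assumed an isomorphism. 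Part (a) then amounts to citing Flandoli's identification $P(t)=Q(t)^{-1}$; the isomorphism property of $Q(t)$ (so that the inverse makes sense) follows either from Flandoli's argument or, on the $P$-side, directly from Proposition~\ref{p:isomorphism}, which already shows $P(t)\gtrsim_T I$ using only that $G$ is invertible.

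For part (b) I would reproduce the open-loop representation from \cite{flandoli_1987}: the optimal control is $\hat g(t)=-B^*z(t)$ where $z(\cdot)$ is the solution of the closed-loop dual evolution $z'=[A-C^*CQ(t)]z$ with $z(0)=Q(0)^{-1}y_0$, and the optimal state is recovered as $\hat y(t)=Q(t)z(t)$. The verification is a direct substitution: one checks that $\hat y=Q z$ solves $\hat y'=A\hat y+B\hat g$ with $\hat y(0)=y_0$ by differentiating the product $Q(t)z(t)$, inserting the dual Riccati equation for $Q'$ and the equation for $z'$, and watching the terms $C^*CQz$ cancel while $BB^*z=BB^*Qz\cdot Q^{-1}\cdot z$-type terms assemble into $B\hat g=-BB^*z$; the initial condition is immediate from $Q(0)z(0)=Q(0)Q(0)^{-1}y_0=y_0$. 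One then confirms optimality by checking the feedback relation $\hat g(t)=-B^*P(t)\hat y(t)=-B^*Q(t)^{-1}\cdot Q(t)z(t)=-B^*z(t)$, consistent with \eqref{e:feedback}, which by uniqueness of the optimal pair (statement S1) closes the argument.

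For part (c) I would specialize: with $C=0$ the dual Riccati equation \eqref{e:Cauchy-for-dualRE} loses its quadratic term and becomes the \emph{linear} terminal-value problem $Q'=A^*Q+QA-BB^*$, $Q(T)=(G^*G)^{-1}=I$. This is solved explicitly by the variation-of-parameters formula, using the group property $e^{(r-t)A^*}=e^{-(r-t)A}$, giving exactly \eqref{e:explicit-Q}; one should note the two summands are $\int_t^T e^{(r-t)A^*}BB^*e^{(r-t)A}x\,dr$ and the transported terminal value $e^{(T-t)A^*}e^{(T-t)A}x$, and that the latter equals $x$ since $A^*=-A$ forces $e^{(T-t)A^*}e^{(T-t)A}=I$ (so in fact $Q(t)x=x+\int_t^T e^{(r-t)A^*}BB^*e^{(r-t)A}x\,dr$, though we may leave the formula in the displayed form for transparency). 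With $C=0$ the dual closed-loop equation reduces to $z'=Az$, hence $z(t)=e^{tA}z(0)=e^{tA}Q(0)^{-1}y_0$, and substituting into (b) yields the stated formulas for $\hat g$ and $\hat y$.

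The main obstacle is not any single computation but the matter of \emph{applicability}: one must be sure that the abstract hypotheses of \cite{flandoli_1987} are met by the present pair $(A,B)$ with $A^*=-A$ and $B$ as in \eqref{e:B}. Concretely, the dual problem requires the admissibility/trace-regularity condition for $(A^*,B)$ in the form $\int_0^T\|B^*e^{tA}x\|_U^2\,dt\le c_T\|x\|_Y^2$; since $\{e^{tA}\}$ is a group, this is equivalent to \eqref{e:admissibility} via the substitution $t\mapsto -t$ together with the identity $e^{tA^*}=e^{-tA}$, so the hypothesis does transfer, but this equivalence should be stated explicitly rather than taken for granted. Once that point is settled, Flandoli's theory delivers the well-posedness of \eqref{e:Cauchy-for-dualRE}, the isomorphism property of $Q(t)$, and the identity $P(t)=Q(t)^{-1}$, and the remaining verifications are the routine substitutions sketched above.
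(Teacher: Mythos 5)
Your proposal is correct and takes essentially the same route as the paper, whose proof consists almost entirely of citing \cite{flandoli_1987} (Theorem~2.1.1 for well-posedness of the dual Riccati equation, the proof of Theorem~2.3.1 for the identification $P(t)=Q(t)^{-1}$, and Section~2.4 for the open-loop representation) and then specializing to $C=0$, $G=I$ exactly as you do in part (c). The only difference is that you spell out the substitution checks for (b), the variation-of-parameters formula for \eqref{e:explicit-Q}, and the transfer of the admissibility condition to the dual pair via the group property $e^{tA^*}=e^{-tA}$ --- details the paper leaves implicit in its appeal to Flandoli's hypotheses.
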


\begin{proof}
All the assertions are (pretty direct) consequences of the results contained in \cite{flandoli_1987}.
The well-posedness of the Cauchy problem \eqref{e:Cauchy-for-dualRE} for the dual RE is shown
in \cite[Theorem~2.1.1]{flandoli_1987}.
The assertion in a) is contained in the proof of Theorem~2.3.1 in \cite{flandoli_1987}.
The assertion in b) is essentially part of the discussion in \cite[Section~2.4]{flandoli_1987}.

The conclusions for the special case $C=0$ and $G=I$ -- our main interest and hence stated separately in c) -- follow starting from the outcomes in b), taking into account that in the case $C=0$ 
\begin{itemize}
\item
the evolution operator $V(t,s)$ in \cite[Theorem~2.1.1]{flandoli_1987} reduces to 
$e^{-(t-s)A^*}\equiv e^{(t-s)A}$;
\item
the (operator) quadratic integral equation $(2.5)$ in \cite[Theorem~2.3.1]{flandoli_1987} in the unknown $Q(t)$ becomes an actual explicit formula for $Q(t)$, that is \eqref{e:explicit-Q}.
\end{itemize}
This concludes the proof of the proposition.
\end{proof}

\medskip


\section*{Acknowledgements}
A major part of this research was carried out while Francesca Bucci was the guest of Matthias Eller at Georgetown University in Washington, DC. Bucci's stays were supported by Georgetown University, which both authors gratefully acknowledge. 

F.\,Bucci wishes to thank the Department of Mathematics and Statistics of the Georgetown University, for its hospitality.
The research of F.\,Bucci has been performed in the framework of the MIUR-PRIN Grant 2020F3NCPX ``Mathematics for Industry 4.0 (Math4I4)'', whose support is acknowledged.
This work was supported by the Gruppo Nazionale per l'Analisi Mate\-ma\-tica, la Probabilit\`a  e le loro Applicazioni of the Istituto Nazionale di Alta Matematica (GNAMPA-INdAM).


\end{document}